\numberwithin{equation}{section}
\newtheorem{lem}{Lemma}[section]
\newtheorem{thm}{Theorem}[section]
\newenvironment{proof}{{\bf Proof: \/}}
{\vspace{-.3in}\begin{flushright}{\vrule height5pt width3pt
depth0pt} \end{flushright}}
\newcommand{\eqn}[1]{
\begin{eqnarray}
#1
\end{eqnarray}
}
\newcommand{\tb}[1]{
	\textbf{#1}
}
\newcommand{\norm}[1]{
	\left\lVert #1 \right\rVert
}
\newcommand{\ip}[1]{
	\left\langle #1\right\rangle
}
\newcommand{\pr}{
	\partial_{\rho}
}
\newcommand{\pt}{
	\partial_{\theta}
}
\newcommand{\pti}{
	\partial_{t}
}
\newcommand{\pz}{
	\partial_{z}
}
\newcommand{\pzp}{
	\partial_{z'}
}
\newcommand{\di}{
	\mbox{div}
}
\begin{document}

\title{Computing the Dirichlet-Neumann Operator on a Cylinder \thanks{Accepted for publication in SIAM Journal on Numerical Analysis.} }
\author{Saad Qadeer \thanks{Department of Mathematics, University of North Carolina Chapel Hill
		(saadq@email.unc.edu)} \and Jon Wilkening \thanks{Department of Mathematics, University of
		California Berkeley ({wilken@math.berkeley.edu}). This work
		was supported in part by the National Science Foundation under
		award number DMS-1716560, and by the U.S. Department of Energy,
		Office of Science, Applied Scientific Computing Research, under
		award number DE-AC02-05CH11231.}}
\date{March 18, 2019}

\maketitle

\begin{abstract}
	The computation of the Dirichlet-Neumann operator for the Laplace equation is the primary challenge for the numerical simulation of the ideal fluid equations. The techniques used commonly for 2D fluids, such as conformal mapping and boundary integral methods, fail to generalize suitably to 3D. In this study, we address this problem by developing a Transformed Field Expansion method for computing the Dirichlet-Neumann
	operator in a cylindrical geometry with a variable upper boundary. This technique reduces the problem to a sequence of Poisson equations on a flat geometry. We design a fast and accurate solver for these sub-problems, a key ingredient being the use of Zernike polynomials for the circular cross-section instead of the traditional Bessel functions. This lends spectral accuracy to the method as well as allowing significant computational speed-up. We rigorously analyze the algorithm and prove its applicability to a wide class of problems before demonstrating its effectiveness numerically.
\end{abstract}

\section{Introduction}

Water-wave equations are notoriously hard to solve numerically because of the nonlinear nature of the problem and the evolving domain, which is itself an unknown quantity. Modern formulations of this problem have focused on the evolution of the boundary variables with the information from the interior of the domain obtained with the help of a Dirichlet-Neumann operator. While the problem has been proven to be well-posed \cite{lannes}, the non-locality of the DNO has been noted to pose a severe challenge in both numerical and theoretical studies \cite{harrop,kaknic09,wilk5w}.   

Traditionally, numerical computations of the DNO have been restricted to the 2D case. Several elegant and robust numerical techniques have been devised including, among others, conformal mapping, finite element and boundary integral methods. However, these techniques do not carry over successfully to higher dimensions since they either rely inherently on the geometry of a 2D space or scale poorly with dimension. We therefore need to consider approaches that may not be widely used in 2D but can be extended to 3D. In \cite{wilk5w}, the authors exhaustively analyze a number of these, including the operator expansion of Craig \& Sulem \cite{crsulem}, the integral equation formulation of Ablowitz, Fokas \& Musslimani \cite{afm} and the transformed field expansion method (TFE) of Nicholls \& Reitich \cite{nicreit01,nicreit04,nicreit06}. The first two are shown to suffer from catastrophic numerical instabilities which severely limits their utility. In particular, they involve significant cancellations of terms or, equivalently, a rapid decay of singular values of the truncations of the associated linear operators. As a result, these methods require multiple precision arithmetic to yield accurate solutions.   

The TFE method, on the other hand, possesses a straightforward generalization to 3D and yields a numerically stable high-order algorithm. In addition, it is also able to handle artificial dissipation \cite{kaknic09}. A careful analysis of this technique, as applied to problems from fluid mechanics and acoustics, is presented in \cite{nicshen09}, including proofs of the analytic dependence of the DNO on the wave profile and the convergence of the method. One shortcoming of this approach is that it is unable to capture a surface bending back on itself as it requires the interface to be the graph of a function. For most applications, however, this is not an issue.  

In this study, we generalize the TFE method to build a solver for the DNO problem for Laplace's equation on a cylinder. We specifically work in this geometry as it poses the most significant computational challenges out of all the regular geometries over a plane. Rectangular geometries with periodic boundary conditions essentially avoid the issue of addressing the interaction of the fluid with a wall and, at any rate, can be treated similarly by an extension of this technique. Previous applications of this technique to 3D have concentrated mainly on the computation of electromagnetic and acoustic waves in spherical and periodic domains (see for instance \cite{fang07,li2016near,ma2015spectral}). The traditional Field Expansion method introduced in \cite{brreit92} proceeds by assuming the domain is a perturbation of a flat geometry and expanding the DNO in terms of the perturbation parameter. Naive implementations of this approach however lead to large cancellations, making it unsuitable for numerical procedures. These cancellations can be avoided by first flattening out the domain; this also simplifies the geometry of the problem and the PDE to be solved is replaced by a sequence of related problems \cite{nichollshops}. Successful implementations of this technique therefore require the rapid computation of the solutions to the associated problems.

The following sections are organized as follows. We first state the water-wave problem and the role of the DNO in the surface formulation. Next, we generalize the TFE framework to a cylindrical geometry and obtain a system of associated problems. A numerical method, based on Zernike polynomials, is presented next for solving these equations. We also provide some implementation details and determine the computational complexity of the algorithm. In the next section, a rigorous analysis of the numerical method is presented, including analyticity results for the TFE method and error estimates for Zernike representations. These are confirmed numerically in the following section before demonstrating the effectiveness of the algorithm on a number of DNO test problems. We conclude by providing further insights and extensions while also describing the limitations of this approach.

\section{The Problem}
Consider a cylinder of unit radius with a flat bottom containing an incompressible, irrotational and inviscid fluid. Denote the Cartesian coordinates on this geometry by $(x',y',z')$. Suppose the fluid at rest has a depth of $h$ (written $z' = -h$) while the interface at the top is given by $z' = \eta(x',y')$. We assume that $h > \norm{\min \{\eta,0\}}_{\infty}$. % $h > \norm{\eta}_{\infty}$). 

The irrotationality of the fluid allows us to express its velocity at any point as the gradient of a potential function $\phi$. The evolution of the fluid is then described by Euler's equations \cite{lamb}: 
\eqn{
	\Delta' \phi &=& 0 \ \ \ \ \ \ \ \ \ \    -h < z' < \eta \label{incomp}\\
	\pti \phi +  \frac{1}{2}|\nabla' \phi|^2 + (g - F(t))\eta &=& 0 \ \ \ \ \ \ \ \ \ \ \ z' = \eta \label{euler} \\
	\pti \eta + \nabla'_H \eta\cdot \nabla'_H \phi &=& \pzp \phi \ \ \ \ \ \ \ z' = \eta \label{neum} 
} 
where $F(t)$ is the external forcing. At the lateral and bottom boundaries, we impose the no-flow conditions $\frac{\partial \phi}{\partial \tb{n}} = 0$ while at the interface, we have the Dirichlet condition $\phi|_{z' = \eta} = q$. Here, $\nabla'_H$ represents the horizontal gradient operator $(\partial_{x'} , \partial_{y'},0)$.

These equations can in fact be reformulated as an evolution problem for the surface variables $\eta$ and $q$ only \cite{crsulem,zakh}:
\eqn{
	\pti \eta &=& G[\eta]q. \label{etaeqn}\\
	\pti q &=& -(g-F(t))\eta - \frac{1}{2}|\nabla'_H q|^2 + \frac{(G[\eta]q + \nabla'_H \eta\cdot \nabla'_H q)^2}{2(1 + |\nabla'_H \eta|^2)}. \label{qeqn}
} 
where $G[\eta]q$ is the Dirichlet-Neumann operator (DNO) given by
\eqn{
	G[\eta]q &=& [\nabla' \phi]_{z' = \eta} \cdot(-\nabla'_H \eta,1) = \left[ -\nabla'_H \phi \cdot \nabla'_H \eta + \pzp \phi \right]_{z' = \eta} \label{dno} 
}
and $\phi$ is the solution of (\ref{incomp}) with the boundary conditions specified above.

To sum up, we only need to solve the first-order system (\ref{etaeqn}, \ref{qeqn}) to completely capture the dynamics of the free surface. The problem, of course, lies in the computation of the highly non-local DNO as it requires, in principle, the solution of Laplace's equation on an evolving domain in three dimensions.

\section{The Transformed Field Expansion}
In this section, we develop the Transformed Field Expansion (TFE) method for computing the DNO for Laplace's equation on a cylinder. The key idea of the TFE is to flatten the boundary of the domain and obtain, in place of Laplace's equation, a sequence of associated Poisson equations on the flattened cylinder, the solutions of which yield the potential in the bulk. This reformulation allows us to build a spectrally accurate technique for computing the DNO. The first step is the change of variables
\eqn{
	x' = \rho \cos(\theta), \qquad y' = \rho \sin(\theta), \qquad z' =  \frac{h + \eta}{h}z + \eta \nonumber
}
where $(\rho,\theta)$ are the polar coordinates on a unit disc and $z \in [-h,0]$. In terms of $(\rho,\theta,z)$, therefore, the domain takes the shape of a flat unperturbed cylinder $C$. The metric tensor in the new coordinates is given by $\mathcal{G} = E_1^TE_1$ where
\eqn{
	E_1 = \left(\frac{\partial x^{\prime \hspace{0.05 cm} i}}{\partial x^j}\right) = \begin{pmatrix}
		\cos(\theta) & -\rho \sin(\theta) & 0 \\
		\sin(\theta) & \rho \cos(\theta) & 0 \\
		\left(1 + \frac{z}{h}\right)\eta_{\rho} & \left(1 + \frac{z}{h}\right)\eta_{\theta} & 1 + \frac{\eta}{h}
	\end{pmatrix}. \nonumber
}
In addition, we introduce a new symbol for the bulk potential in the new coordinates
\eqn{u(\rho,\theta,z) = \phi(x',y',z').\label{symbol}}
We next need to determine the transformation that Laplace's equation (\ref{incomp}) undergoes. The Laplace-Beltrami operator applied to both sides of (\ref{symbol}) yields
\eqn{\frac{1}{\sqrt{\det \mathcal{G}}} \frac{\partial}{\partial x^a} \left(\sqrt{\det \mathcal{G}} (\mathcal{G}^{-1})^{ab}u_{x^b}\right) = \Delta \phi = 0. \nonumber
}
Using $\sqrt{\det \mathcal{G}} = (h+\eta)\rho/h$, we obtain 
\eqn{
	\qquad\quad\di\left( (h+\eta)^{-1} EE^T \nabla u\right) = 0, \ \ \mbox{where} \ \ E = \begin{pmatrix}
		h + \eta & 0 & 0 \\
		0 & h+\eta & 0 \\
		-(h+z)\eta_{\rho} & -(h+z)\frac{\eta_{\theta}}{\rho} & h \\
	\end{pmatrix}. \label{diveqn}
}
Here, $\di \ \tb{v} = \rho^{-1}\pr(\rho v_1) + \rho^{-1}\pt v_2 + \pz v_3$ and $\nabla v = (\pr v , \rho^{-1}\pt v, \pz v)^T$ are the divergence and gradient operators in cylindrical coordinates, respectively. Expanding (\ref{diveqn}) leads to
\begin{align*}
(h+\eta)^{-1} \di (EE^T\nabla u) &= -\nabla ((h + \eta)^{-1}) \cdot (EE^T \nabla u) \\
&= (h + \eta)^{-2} (\eta_{\rho} , \rho^{-1}\eta_{\theta} , 0) \ EE^T \ \nabla u.
\end{align*}
Finally, using
\eqn{(h + \eta)^{-1} (\eta_{\rho} , \rho^{-1}\eta_{\theta} , 0) \ E = (\eta_{\rho} , \rho^{-1}\eta_{\theta} , 0)\nonumber}
gives
\eqn{
	\di (EE^T\nabla u) = (\eta_{\rho} , \rho^{-1}\eta_{\theta} , 0) \ E^T  \nabla u. \label{diveqn2}
}

Ostensibly, we have bartered an elementary equation on a challenging domain for a much tougher problem on a simple geometry. This form, however, lends itself to a simplification inspired by boundary perturbation methods. We assume the interface to be a deviation from a flat surface, to wit, 
\eqn{
	\eta(\rho,\theta) &=& \epsilon f(\rho,\theta) \label{pert}
}
for some $\epsilon > 0$. The conditions under which this assumption leads to a useful solution will be made precise later on, but it is worth noting that the actual value of $\epsilon$ is irrelevant. Writing $EE^T = h^2I + \epsilon A_1(f) + \epsilon^2 A_2(f)$ and the first and second columns of $E$ as $B_0+\epsilon B_1(f)$ and $C_0+\epsilon C_1(f)$ in (\ref{diveqn2}) yields
\eqn{
	&&\di \left[(h^2I + \epsilon A_1(f) + \epsilon^2 A_2(f)) \nabla u \right] = \\
	&&\qquad \epsilon f_{\rho} (B_0 + \epsilon B_1(f)) \cdot \nabla u + \epsilon \rho^{-1}f_{\theta} (C_0 + \epsilon C_1(f)) \cdot \nabla u. \nonumber
}
Grouping together similar powers of $\epsilon$ leads to 
\eqn{
	-h^2\Delta u &=& \epsilon\left[\di(A_1(f) \nabla u) - \left(f_{\rho}B_0 + \frac{f_{\theta}}{\rho}C_0\right) \cdot \nabla u\right] +  \label{diveqn3}\\
	&& \qquad \epsilon^2\left[\di(A_2(f) \nabla u) - \left(f_{\rho}B_1(f) + \frac{f_{\theta}}{\rho}C_1(f)\right) \cdot \nabla u\right].  \nonumber
}
The boundary conditions for $u$ meanwhile are
\eqn{
	\qquad u|_{z = 0} = q, \qquad u_z|_{z = -h} = 0, \qquad hu_{\rho}|_{\rho = 1} = \epsilon[-fu_{\rho} + (z + h)f_{\rho}u_z]_{\rho = 1}. \label{bcs}
}

Next, we plug in the field expansion $u(\rho,\theta,z) = \sum_{k = 0}^{\infty} \epsilon^k u_k(\rho,\theta,z)$ in (\ref{diveqn3}) and compare coefficients of like powers of $\epsilon$. This leads to a sequence of Poisson equations for the $u_k$: 
\eqn{
	-h^2\Delta u_k &=& r_k, \label{diveqn4a}
}
where
\eqn{
	r_k &=& \di(A_1 \nabla u_{k-1}) + \di(A_2 \nabla u_{k-2}) \label{diveqn4}\\
	&& \qquad - \left(f_{\rho}B_0 + \frac{f_{\theta}}{\rho}C_0\right)\nabla u_{k-1}  - \left(f_{\rho}B_1 + \frac{f_{\theta}}{\rho}C_1\right)\nabla u_{k-2}. \nonumber
}
The boundary conditions can likewise be obtained from (\ref{bcs}): 
\eqn{
	u_k|_{z = 0} = \delta_{k,0}q, \qquad  \pz u_k|_{z = -h} = 0, \qquad \pr u_k|_{\rho = 1} = \chi_k,  \label{bcs2}
}
where 
\eqn{
	\chi_k &=& h^{-1}\left[-f \pr u_{k-1} + (h + z)(\pr f) \pz u_{k-1}\right]_{\rho = 1}. \label{chik}
}

Observe that the expressions on the right in both (\ref{diveqn4}) and (\ref{bcs2}) depend on lower-order terms in the field expansion. As a result, we can sequentially solve this three-term recurrence for the $u_k$ up to a sufficiently high order $K$, and combine them to obtain an approximation to $u$. It is remarkable that only the previous two terms are needed to compute $u_k$; equations (\ref{diveqn}) and (\ref{diveqn2}) were carefully manipulated to make this happen. Nicholls and Reitich \cite{nicreit04} were the first to discover that a three-term recurrence is possible in the periodic case. Once the recurrence is solved, the potential in the bulk can then be used to compute the Neumann data. Rather than transforming to the original potential $\phi$ and using (\ref{dno}), we express $G[\eta]q$ directly in terms of the $u_k$. In the new coordinates, (\ref{dno}) becomes
\eqn{
	hG[\epsilon f]q &=& h(\pz u) + \epsilon\left(-fG[\epsilon f]q - h\nabla_H f \cdot \nabla_H u\right) \\
	&& \qquad + \epsilon^2 \left(-f \nabla_H f \cdot  \nabla_H u + h |\nabla_H f|^2 \pz u\right) \nonumber
}   
at $z = 0$. Note that we can replace $\nabla_H u$ by $\nabla_H q$ in the above expression. Plugging in $G[\epsilon f]q = \sum_{k = 0}^{\infty} \epsilon^k G_k[f]q$ and the field expansion for $u$ gives
\eqn{
	hG_k[f]q &=& h(u_k)_z - fG_{k-1}[f]q + h|\nabla_H f|^2(u_{k-2})_z  \label{dno3}\\
	&& \quad - \ \delta_{k,1}\left(h \nabla_H f \cdot \nabla_H q\right) - \delta_{k,2}\left(f \nabla_H f \cdot \nabla_H q\right). \nonumber
} 

The formulation (\ref{diveqn4a}) with associated boundary conditions (\ref{bcs2}) is exact and, assuming the expansions converge, yield the true solution to (\ref{diveqn2}). In Section \ref{convproof}, we shall show that, under certain conditions, these expansions do indeed converge strongly.

\section{Solving the Poisson Equations}
In this section, we develop a fast and accurate method to solve the model Poisson equation $-\Delta w = r$ on a flat cylinder $C$ of unit radius and height $h$ with boundary conditions 
\eqn{
	w|_{z = 0} = q, \qquad w_z|_{z = -h} = 0, \qquad w_{\rho}|_{\rho = 1} = \chi. \label{bcs3}
}
Spectral methods for solving similar problems are fairly well developed \cite{boydyu,shen1997efficient,vasil2016tensor}. Broadly speaking, these techniques employ polynomial bases along the radial and vertical axes with a Fourier basis naturally accounting for the azimuthal direction. While our method also follows this general approach, it uses a novel combination of Zernike polynomials on the disc and a Lagrange basis along the $z$-axis. This modal-nodal approach allows for a simpler formulation as well as a well-structured linear system that lends itself to rapid and well-conditioned computations. We now outline our numerical technique, discuss its implementation details, and analyze its computational complexity.

\subsection{The Numerical Method}

The method for solving the model Poisson equation with boundary conditions (\ref{bcs3}) proceeds by searching for an approximate solution in a polynomial subspace. We begin by building a basis for the function space and applying the Galerkin condition. The basis functions are described using a modal representation in $(\rho,\theta)$ and a nodal representation in $z$.

In more detail, for $m,n \in \mathbb{Z}$ with $n \geq 0$, let $P_n^{(0,|m|)}(x)$ be the $n$th $(0,|m|)$ Jacobi polynomial on $[-1,1]$ and set $\mu_{mn} = \sqrt{1 + |m| + 2n}$. Define the functions
\eqn{
	\zeta_{mn}(\rho,\theta) = \mu_{mn}P_n^{(0,|m|)}(2\rho^2 - 1)\rho^{|m|}e^{im\theta} \label{zernike}
}
where $(\rho,\theta)$ are the polar coordinates on the unit disc $D$. These are known as Zernike polynomials \cite{bhatiawolf,boydyu,zernike}. Traditionally, these functions are indexed differently but we prefer this form as it leads to simpler expressions. The weight $\rho^{|m|}$ and the factor $\mu_{mn}$ ensure that the family $\{\zeta_{mn}\}_{m \in \mathbb{Z}, n \geq 0}$ is orthonormal on the unit disc with respect to the inner product
\eqn{
	\ip{v,w}_{L^2(D)} = \frac{1}{\pi} \int_0^{2 \pi} \int_0^1 \overline{v(\rho,\theta)}w(\rho,\theta) \ \rho \ d\rho \  d\theta. \label{l2ip}
}
Indeed, the substitution $\xi = 2\rho^2 - 1$ yields
\eqn{
	% && \hspace*{-32pt}
	&&\ip{\zeta_{m_1n_1} , \zeta_{m_2n_2}}_{L^2(D)}  \nonumber\\
	%	&=& \frac{\mu_{m_1n_1}\mu_{m_2n_2}}{\pi} \int_0^{2\pi} \int_0^1  P^{(0,|m_1|)}_{n_1}(2\rho^2 - 1) P^{(0,|m_2|)}_{n_2}(2\rho^2 - 1) \rho^{|m_1| + |m_2| + 1}e^{i(m_2-m_1)\theta} \ d\rho \ d\theta \nonumber\\
	&& \qquad\qquad = \frac{\mu_{m_1n_1}\mu_{m_2n_2}}{2} \delta_{m_1,m_2} \int_{-1}^1  P^{(0,|m_1|)}_{n_1}(\xi) P^{(0,|m_2|)}_{n_2}(\xi) \left(\frac{1 + \xi}{2}\right)^{\frac{|m_1| + |m_2|}{2} } d\xi. \nonumber
}
%where we used the substitution $x = 2\rho^2 - 1$.
Continuing, we replace the $m_2$'s by $m_1$'s to get
\eqn{
	\ip{\zeta_{m_1n_1} , \zeta_{m_2n_2}}_{L^2(D)} &=& \delta_{m_1,m_2} \frac{\mu_{m_1n_1}\mu_{m_1n_2}}{2^{|m_1| + 1}}  \int_{-1}^1  P^{(0,|m_1|)}_{n_1}(\xi) P^{(0,|m_1|)}_{n_2}(\xi) \left(1 + \xi\right)^{|m_1| }d\xi \nonumber\\
	&=& \delta_{m_1,m_2}\delta_{n_1,n_2} \nonumber
} 
due to the orthogonality of Jacobi polynomials. By the Stone-Weierstrass theorem, the algebra generated by $\{\zeta_{mn}\}$ is dense in $C(D)$, the space of continuous complex-valued functions on $D$. As $C(D)$ in turn is dense in $L^2(D)$, we conclude that $\{\zeta_{mn}\}$ forms an orthonormal basis for $L^2(D)$. 

Let $J$ be a positive integer and let $\{z_j\}_{0 \leq j \leq J}$ be the $(J+1)$ Chebyshev-Lobatto points over $[-h,0]$ defined by 
\begin{flalign*} & & & & &  
&\hspace{0 pt} & z_j = -\frac{h}{2}\left(1 + \cos\left(\frac{\pi j}{J} \right)\right), &\hspace{0 pt} & (0 \leq j \leq J). & 
\end{flalign*}
Also, let $\ell_j$ be the $j$th Lagrange polynomial with respect to these nodes so that $\ell_j(z_i) = \delta_{ij}$.
On $C$, we define the basis functions
\eqn{\psi_{mnj}(\rho,\theta,z) = \zeta_{mn}(\rho,\theta)\ell_j(z),\nonumber}
which are not orthogonal (but nevertheless well-conditioned) with respect to the inner product on $L^2(C)$, namely
\eqn{\ip{v , w} = \frac{1}{\pi} \int_{-h}^{0}\int_0^{2\pi} \int_0^1 \overline{v}  w \ \rho \ d\rho \ d\theta \ dz.\nonumber}

This choice of basis functions allows us to replace the unwieldy Bessel functions and hyperbolic functions along the radial and vertical axes respectively by families of polynomials. These polynomials are easy to evaluate and lend themselves to rapid manipulations. Moreover, as shown in \cite{boydyu}, Zernike polynomials possess distinct advantages in terms of accuracy, cost and storage over other function families on the unit disc. In particular, as we shall also demonstrate, they are much superior to Bessel functions. As a result, the subsequent formulation is considerably simplified and the computations are faster and more accurate.

For positive integers $M,N,J$, we define 
\eqn{
	\mathcal{A}(M,N,J) &=& \{(m,n,j): -M \leq m \leq M , \ 0 \leq n \leq N, \ 0 \leq j \leq J\}. \nonumber
} 
For the model problem $-\Delta w = r$ with boundary conditions (\ref{bcs3}), we begin by imposing the Galerkin condition 
\eqn{\ip{\psi_{m'n'j'} , -\Delta w} = \ip{\psi_{m'n'j'} , r}\nonumber} 
for $(m',n',j')\in \mathcal{A}(M,N,J-1)$. Integrating by parts gives
\eqn{ 
	\iiint_C \overline{\nabla \psi_{m'n'j'}} \cdot \nabla w \ dV  = \iiint_C \overline{\psi_{m'n'j'}} r \ dV + \iint_{\partial C} \overline{\psi_{m'n'j'}} \frac{\partial w}{\partial \tb{n}} \ dA \label{galeqn}
}
where $\partial C = B_u \cup B_d \cup S$; here, $B_u$ and $B_d$ are the upper and lower ends of the cylinder respectively and $S$ is the curved surface. Note that as $\psi_{m'n'j'}|_{B_u} \equiv 0$, there is no contribution from $B_u$. Meanwhile, the second condition in (\ref{bcs3}) ensures that the integral over $B_d$ is also zero. As a result, the boundary contributions can be written as 
\eqn{\iint_{\partial C} \overline{\psi_{m'n'j'}} \frac{\partial w}{\partial \tb{n}} \ dA = \iint_{S} \overline{\psi_{m'n'j'}} w_\rho \ dA = \iint_{S} \overline{\psi_{m'n'j'}} \chi \ dA := I_{m'n'j'}(\chi),\nonumber}
where $\chi$ is the lateral boundary condition in (\ref{bcs3}).

Next, write $w = \sum_{m,n,j } c_{mnj}\psi_{mnj}$ and decompose $r = \sum_{m,n,j} d_{mnj}\psi_{mnj}$, where the sums are over $\mathcal{A}(M,N,J)$ and
\eqn{
	d_{mnj} &=& \ip{\zeta_{mn} , r(\cdot,\cdot,z_j)}_{L^2(D)}, \label{dmnj}
}
to obtain 
\eqn{
	\begin{aligned}
		&\sum_{m,n,j } c_{m,n,j} \iiint_C \overline{\nabla \psi_{m'n'j'}} \cdot \nabla \psi_{mnj} \ dV = \\
		&\qquad\qquad I_{m'n'j'}(\chi) +  \sum_{m,n,j} d_{mnj} \iiint_C \overline{ \psi_{m'n'j'}}  \psi_{mnj} \ dV .
	\end{aligned}\label{gal2}
}
The Dirichlet condition in (\ref{bcs3}) implies that $\{c_{mnj}\}$ is known for $j = J$, so those terms can be moved to the right as well. We therefore have a system of the type $S\vec{c} = T\vec{d} + \vec{\kappa}$, where $S$ and $T$ are the stiffness and mass matrices respectively and $\vec{\kappa}$ is a vector generated by the boundary data $\chi$ and the $\{c_{mnJ}\}$. Observe that $S \in \mathbb{R}^{2MJ(N+1) \times 2MJ(N+1)}$ and $T \in \mathbb{R}^{2MJ(N+1) \times 2M(J+1)(N+1)}$.   

These stiffness and mass integrals can be computed by exploiting the structure of the basis functions. Define the matrices $A(m) \in \mathbb{R}^{(N+1) \times (N+1)}$ for each $m$ and $\hat \Sigma \in \mathbb{R}^{(J+1) \times J}$, $\tilde \Sigma \in  \mathbb{R}^{J \times J}$ by
\eqn{
	A{(m)}_{n'n} &=& \frac{1}{\pi}\int_0^{2\pi}\int_0^1 \overline{\nabla_H \zeta_{mn'}}\cdot \nabla_H \zeta_{mn} \ \rho \ d\rho \ d\theta, \qquad (0 \leq n,n' \leq N),  \nonumber\\
	\hat \Sigma_{jj'} &=& \int_{-h}^{0} \ell_{j}(z) \ell_{j'}(z)  \ dz, \qquad (0 \leq j \leq J, 0 \leq j' \leq J-1), \nonumber\\
	\tilde \Sigma_{jj'} &=& \int_{-h}^{0} \ell'_{j}(z) \ell'_{j'}(z)  \ dz, \qquad (0 \leq j,j' \leq J-1),\nonumber
}
where $\ell_j'= \frac{\partial \ell_j}{\partial z}$. In addition, let $\hat \Sigma$ with the last row omitted be denoted by $\Sigma$. Then,
\eqn{T_{m'n'j',mnj} = \ip{\zeta_{m'n'},\zeta_{mn}}_{L^2(D)}\hat\Sigma_{jj'} = \delta_{m'm}\delta_{n'n}\hat\Sigma_{jj'},\nonumber}
where we used the orthonormality of the $\zeta_{mn}$. In addition,
\eqn
{S_{m'n'j' , mnj} = \delta_{m'm}(A{(m)}_{n'n}\Sigma_{jj'} + \delta_{n'n} \tilde \Sigma_{jj'} ). \label{stiffm}
} 
Finally, for each $m$ define the matrices $\Gamma(m), K(m), G(m) \in \mathbb{C}^{(N+1) \times J}$, and $E(m) \in \mathbb{C}^{(N+1) \times (J+1)}$ by $\Gamma{(m)}_{nj} = c_{mnj}$, $E{(m)}_{nj} = d_{mnj}$ and $K{(m)}_{nj} = \vec{\kappa}_{mnj}$ and $G{(m)} = E{(m)}\hat\Sigma + K{(m)}$ to get 
\eqn{
	A{(m)}\Gamma{(m)}\Sigma + \Gamma{(m)} \tilde \Sigma = G{(m)}. \label{system}
}
Observe that this is a Sylvester equation; alternatively, it can be seen as a sparse linear system of size $J(2M+1)(N+1)$ for the unknown $\{\Gamma(m)\}_{-M \leq m \leq M}$. Instead of using the standard Bartels-Stewart algorithm or an iterative or direct solver, we design an alternative, well-conditioned method for this problem that takes into account the structure of the matrices. First, it can be shown \cite{qadeerthesis} that
\eqn{A{(m)}_{n'n} = 2\mu_{mn'}\mu_{mn}\big[2\gamma_{n'n}(\gamma_{n'n} + |m| + 1) + |m|\big]\nonumber}
where $\gamma_{n'n} = \min\{n',n\}$. The symmetric positive definiteness of $A{(m)}$ allows the eigen-decomposition $A{(m)} = W{(m)} D{(m)}^2 W{(m)}^T$ with $W(m)$ orthogonal.

Next, we devise an efficient method for computing the mass and stiffness matrices $\Sigma$ and $\tilde \Sigma$ appearing in (\ref{system}). In fact, we avoid forming these matrices explicitly and instead construct their Cholesky decompositions directly. As noted in \cite{wilkplasma}, this avoids squaring the condition number of the system. To compute the Cholesky decompositions, let $\{(x_i,\sigma_i)\}$ be the $(J+1)$ point Gauss-Legendre quadrature scheme over $[-h,0]$. Defining the matrices $E_{ij} = \ell_j(x_i)\sigma_i^{1/2}$ and $\tilde E_{ij} = \ell'_j(x_i)\sigma_i^{1/2}$ for $0 \leq j \leq J-1$ allows us to write 
\eqn{\Sigma = E^TE , \qquad \tilde \Sigma = \tilde E^T \tilde E.\nonumber} 
Note that the columns of $E$ (and $\tilde E$) must be linearly independent: any linear combination of the columns that equals zero would correspond to a polynomial of degree at most $J$ ($J-1$ for $\tilde E$) that has $(J+1)$ zeros (at the quadrature points) so that polynomial must be identically zero; the coefficients in the linear combination must necessarily all be zero since the Lagrange polynomials are linearly independent. Thus, the $QR$ factorizations $E = QR$ and $\tilde E = \tilde Q \tilde R$ yield invertible upper triangular matrices. Plugging these decompositions into (\ref{system}) gives  
\eqn{
	(W(m) D(m)^2 W(m)^T) \Gamma(m) (R^TR) + \Gamma(m)(\tilde R^T \tilde R) &=& G(m), \nonumber\\
	D(m)^2 (W(m)^T \Gamma(m)) R^T + (W(m)^T\Gamma(m))\tilde R^T \tilde R R^{-1} &=& W(m)^TG(m)R^{-1}. \nonumber
} 
Next, we compute the singular value decomposition $R\tilde R^{-1} = U\Lambda V^T$ to obtain $\tilde R^T = R^TU\Lambda^{-1}V^T$ and $\tilde R R^{-1} = V\Lambda^{-1} U^T$. This gives $\tilde R^T \tilde R R^{-1} = R^TU\Lambda^{-2}U^T$ and hence
\eqn{\qquad
	\begin{aligned}
		D(m)^2 (W(m)^T \Gamma(m) R^T) + (W(m)^T\Gamma(m) R^T) U\Lambda^{-2}U^T &= W(m)^TG(m)R^{-1}, \\
		D(m)^2 (W(m)^T \Gamma(m) R^T U) + (W(m)^T\Gamma(m) R^T U)\Lambda^{-2} &= W(m)^TG(m)R^{-1}U.
	\end{aligned}\label{simsys}
} 
As both $D(m)^2$ and $\Lambda$ are diagonal, we have
\eqn{(W(m)^T \Gamma(m) R^T U)_{nj} = \frac{(W(m)^TG(m)R^{-1}U)_{nj}}{D(m)^2_{nn} + \Lambda_{jj}},\label{diagsolve}}
which can then be used to solve for $\Gamma(m)$. 

\subsection{Complexity Analysis}
Next, we present a computational analysis of the algorithm described above. The bulk of the computation essentially involves finding the coefficients $\{d_{mnj}\}$ and performing the matrix multiplications specified in (\ref{simsys}). Assuming that $M$ and $N$ are $O(J)$ as well, the latter require $O(J^3)$ operations for each $m$. The former requires the computation of the expressions for $r_k$ in (\ref{diveqn4}) and the projection-interpolant in (\ref{dmnj}). Upon expanding the formulas for $r_k$, we obtain
\eqn{
	r_k(\rho,\theta,z) &=&  -2h^{-1}f \Delta_H u_{k-1} + h^{-1}(h+z)[2 \nabla_H f\cdot \nabla_H(\pz u_{k-1}) + (\pz u_{k-1})\Delta_H f]    \label{rk} \nonumber\\
	&& - h^{-2}f^2\Delta_H u_{k-2} + h^{-2}f(h+z)[2 \nabla_H f\cdot \nabla_H(\pz u_{k-2}) + (\pz u_{k-2})\Delta_H f] \nonumber\\
	&& -h^{-2}(h+z)|\nabla_H f|^2 [2(\pz u_{k-2}) + (h+z)(\pz^2 u_{k-2})].
}  
Note that the solutions $u_{k-1}$ and $u_{k-2}$ are already calculated in terms of the basis functions; in particular, at each horizontal slice indexed by $j$, these solutions are linear combinations of Zernike polynomials. Having found the Zernike modal representation for $f$ as well, carrying out the computation (\ref{dmnj}) comes down to a sequence of projections of the sort
\eqn{&&\mbox{(i)} \ \ \ip{\zeta_{mn} , v_1v_2}_{L^2(D)} \nonumber\\
	&&\mbox{(ii)} \ \ \ip{\zeta_{mn} , \nabla_H v_1 \cdot \nabla_H v_2}_{L^2(D)}, \nonumber \\ % \quad \mbox{and} \nonumber\\
	&&\mbox{(iii)}  \ \ \ip{\zeta_{mn} , (\Delta_H v_1)v_2}_{L^2(D)},\nonumber}
where $v_1$ and $v_2$ are functions on $D$ with known Zernike modal representations. One way to go about this would be to approximate the projections by a pseudo-spectral approach combined with a fast Chebyshev-Jacobi transform \cite{slevinsky2017use}. We instead adopt a Galerkin formulation, perform the integrals exactly, and demonstrate how these can be done efficiently. For instance, for (i), let
\eqn{v_1(\rho,\theta) = \sum_{m,n} \alpha_{mn} \zeta_{mn}(\rho,\theta) , \qquad v_2(\rho,\theta) = \sum_{m,n} \beta_{mn} \zeta_{mn}(\rho,\theta).\nonumber}
Then, the expansion coefficients for $v_1v_2$ are given by
\begin{align*}
\ip{\zeta_{mn} , v_1v_2}_{L^2(D)} &= \\
%\frac{1}{\pi}\sum_{m_1,m_2} \sum_{n_1,n_2} \alpha_{m_1,n_1} \beta_{m_2,n_2}\mu_{m_1n_1}\mu_{m_2n_2}\mu_{mn} \int_0^{2\pi} e^{i(m_1+m_2 - m)\theta} \ d\theta\\
% & \hspace*{30pt} \int_0^1 P_{n_1}^{(0,|m_1|)}(2\rho^2 - 1) P_{n_2}^{(0,|m_2|)}(2\rho^2 - 1) P_{n}^{(0,|m|)}(2\rho^2 - 1) \rho^{|m_1| + |m_2| + |m| + 1} \ d\rho \nonumber\\
& \hspace*{-50pt} \frac{1}{2}\sum_{m_1,m_2} \sum_{n_1,n_2} \alpha_{m_1,n_1} \beta_{m_2,n_2}\mu_{m_1n_1}\mu_{m_2n_2} \mu_{mn} \delta_{m_1+m_2,m} \nonumber\\
& \int_{-1}^1 P_{n_1}^{(0,|m_1|)}(\xi) P_{n_2}^{(0,|m_2|)}(\xi) P_{n}^{(0,|m|)}(\xi) \left(\frac{1 + \xi}{2}\right)^{\frac{|m_1| + |m_2| + |m|}{2}} \ d\xi,
\end{align*}
where $-M \leq m_1,m_2 \leq M$ and $0 \leq n_1,n_2 \leq N$ in the sums above. Observe that the highest degree in the integrands above is $(3N+M)$. Let $N_g = (3N+M)/2 + 1$ and let $\{(\xi_i,\sigma_i)\}$ be the $N_g$-point Gauss-Legendre quadrature scheme on $[-1,1]$. This scheme is guaranteed to correctly integrate all polynomials up to degree $(3N+M+1)$ so it is well-suited for our task. Thus, we have
\eqn{
	\ip{\zeta_{mn} , g_1g_2}_{L^2(D)} &=& \frac{1}{2}\sum_{m_1,m_2 } \sum_{n_1,n_2}  \alpha_{m_1,n_1} \beta_{m_2,n_2}\mu_{m_1n_1}\mu_{m_2n_2} \mu_{mn} \delta_{m_1+m_2,m} \nonumber\\
	&& \; \sum_{i = 1}^{N_g} P_{n_1}^{(0,|m_1|)}(\xi_i) P_{n_2}^{(0,|m_2|)}(\xi_i) P_{n}^{(0,|m|)}(\xi_i) \left(\frac{1 + \xi_i}{2}\right)^{\frac{|m_1| + |m_2| + |m|}{2}} \sigma_i. \nonumber
}
This quadrature rule is preferred to a Gauss-Jacobi scheme because it allows us to write all expressions of this type as 
\begin{align}
& \frac{1}{2}\sum_{i = 1}^{N_g} \left\{ \sum_{m_1,m_2} \delta_{m_1+m_2,m} \left[\sum_{n_1 = 0}^N \alpha_{m_1,n_1}\mu_{m_1n_1}   P_{n_1}^{(0,|m_1|)}(\xi_i)\left(\frac{1 + \xi_i}{2}\right)^{\frac{|m_1|}{2}} \sigma_i^{1/3}\right] \right.  \nonumber\\
& \hspace*{.5in} \left.\left[\sum_{n_2 = 0}^N \beta_{m_2,n_2}\mu_{m_2n_2} P_{n_2}^{(0,|m_2|)}(\xi_i)\left(\frac{1 + \xi_i}{2}\right)^{\frac{|m_2|}{2}} \sigma_i^{1/3}\right] \right\} \times \\
& \hspace*{2.3in} \times \mu_{mn} P_{n}^{(0,|m|)}(\xi_i) \left(\frac{1 + \xi_i}{2}\right)^{\frac{|m|}{2}} \sigma_i^{1/3}. \nonumber
\end{align}
The terms of the form $\mu_{mn} P_{n}^{(0,|m|)}(\xi_j) \left(\frac{1 + \xi_j}{2}\right)^{\frac{|m|}{2}} \sigma_j^{1/3}$ can be pre-computed for all the polynomials and quadrature points and weights. Given $\{\alpha_{mn}\}$ and $\{\beta_{mn}\}$, the sums in the square parentheses are evaluated at the quadrature points and their Fast-Fourier Transforms computed; this requires $O(M(N + \log(M)))$ operations for each $i$. Multiplying them together and taking the inverse FFTs executes the convolution inside the curly braces and requires an additional $O(M\log(M))$ operations. Finally, we can multiply the external factors at a cost of $O(MN)$ and sum over the quadrature index $i$ to obtain all the modal coefficients of $v_1v_2$. The complexity of computing a projection of type (i) therefore comes up to $O(M(M+N)(N+\log(M)))$. Types (ii) and (iii) differ from (i) only in that they involve the derivatives of the functions; due to the derivative expressions for Jacobi polynomials, this only requires appropriate permutations of the coefficients and thus these types also possess the same complexity (see \cite{qadeerthesis} for details). If $M,N = O(J)$, the procedures require $O(J^3)$ operations. The entire projection-interpolant for $r_k$ in (\ref{dmnj}) requires these computations for each horizontal slice indexed by $j$ and, hence, can be carried out in $O(J^4)$ steps. We conclude that the total complexity of our Poisson solver on a flat cylinder is $O(J^4)$.

\section{Convergence Proof}\label{convproof}
In this section, we analyze our basis functions in detail and use them to develop a convergence proof for the method outlined above. Along the way, we shall also establish the superiority of Zernike polynomials over Bessel functions for representing smooth functions on the unit disc.

As before, let $D$ be the open unit disc in the plane. For integral values of $s \geq 0$, let $H^s(D)$ denote the usual $L^2$-Sobolev space on $D$. Similarly, let $H_{\sigma}^s((-h,0))$ be the Sobolev space equipped with the norm
\eqn{\norm{v}_{H_{\sigma}^s((-h,0))}^2 = \int_{-h}^{0} \sum_{k = 0}^{s} |v^{(k)}(z)|^2 \sigma(z) \ dz ,\nonumber}
where ${\sigma}(z) = \frac{1}{2}\left(\frac{-z}{h}\left(1 + \frac{z}{h}\right)\right)^{-1/2}$. Note that under the transformation $x = 1 + 2z/h$, the weight function gets changed to the Chebyshev weight $(1 - x^2)^{-1/2}$ over $[-1,1]$.     

Recall the Zernike polynomials $\{\zeta_{mn}\}$ defined on $D$ in (\ref{zernike}). We next present an alternate, sharper, characterization for these functions that will lead to a useful approximation result that shall feature prominently in the analysis of our algorithm.
\begin{lem}\label{eigenf}
	Define the linear operator
	\eqn{Lu = -\rho^{-1}\pr\left[\rho(1 - \rho^2)\pr u\right] - \rho^{-2}\pt^2u.\nonumber}
	Then, 
	\begin{enumerate}
		\item[(a)] $L$ is bounded from $H^{l+2}(D)$ to $H^l(D)$ for any integer $l \geq 0$.
		\item[(b)] $\{\zeta_{mn}\}$ are eigenfunctions of $L$ with eigenvalues $\lambda_{mn} = (|m| + 2n)(|m| + 2n + 2)$. 
	\end{enumerate}
\end{lem}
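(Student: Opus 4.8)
The plan is to treat the two parts separately, handling the eigenvalue computation (b) first since it fixes the right bookkeeping. Since the $\theta$-dependence of $\zeta_{mn}$ is $e^{im\theta}$, the angular term contributes $-\rho^{-2}\pt^2\zeta_{mn} = m^2\rho^{-2}\zeta_{mn}$, so it suffices to analyze the radial operator $\mathcal{L}_r R := -\rho^{-1}\pr[\rho(1-\rho^2)\pr R] + m^2\rho^{-2}R$ acting on $R(\rho) = P_n^{(0,|m|)}(2\rho^2-1)\rho^{|m|}$ (the constant $\mu_{mn}$ being irrelevant). The key step is the substitution $\xi = 2\rho^2 - 1$, under which $\rho^2 = (1+\xi)/2$, $1-\rho^2 = (1-\xi)/2$, and $\pr = 4\rho\,\partial_\xi$. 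I would write $R = \rho^{|m|}p(\xi)$ with $p = P_n^{(0,|m|)}$ and substitute directly. Two features should emerge: first, the terms proportional to $\rho^{|m|-2}$ (the genuinely singular ones at the origin) cancel identically — this is exactly why the weight $\rho^{|m|}$ is the correct one and guarantees that $L\zeta_{mn}$ is again a polynomial; second, after factoring out $\rho^{|m|}$, the operator acting on $p$ is a constant multiple of the Jacobi operator.

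Concretely, I expect to arrive at
\[
\mathcal{L}_r R = \rho^{|m|}\Big\{-4(1-\xi^2)p'' - 4\big[\,|m| - (|m|+2)\xi\,\big]p' + |m|(|m|+2)\,p\Big\}.
\]
At this point I invoke the Jacobi differential equation for $\alpha=0$, $\beta=|m|$, namely $(1-\xi^2)p'' + [\,|m|-(|m|+2)\xi\,]p' + n(n+|m|+1)p = 0$, to eliminate the $p''$ and $p'$ terms. This leaves $\mathcal{L}_r R = [\,4n(n+|m|+1) + |m|(|m|+2)\,]R$, and a short algebraic check confirms $4n(n+|m|+1) + |m|(|m|+2) = (|m|+2n)(|m|+2n+2) = \lambda_{mn}$, completing (b).

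For (a), the operator $L$ looks singular at $\rho=0$ because of the $\rho^{-1}$ and $\rho^{-2}$ factors, but this is only a coordinate artefact and the plan is to remove it by passing to Cartesian coordinates. Writing $E = x\px + y\partial_y = \rho\pr$ for the Euler field, I would verify the identity $L = -\Delta + E^2 + 2E$, where $\Delta$ is the flat Laplacian on $D$. Indeed, expanding the radial part $-\rho^{-1}\pr[\rho(1-\rho^2)\pr u]$ and absorbing the angular term $-\rho^{-2}\pt^2 u$ into the full Laplacian shows $Lu = -\Delta u + \rho^2 u_{\rho\rho} + 3\rho u_\rho$, and one checks $\rho^2 u_{\rho\rho} + 3\rho u_\rho = E^2 u + 2Eu$. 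Since $E^2 = x^2\partial_x^2 + 2xy\,\partial_x\partial_y + y^2\partial_y^2 + x\px + y\partial_y$ and $E$ both have polynomial coefficients, $L$ is a second-order differential operator whose coefficients are polynomials, hence lie in $C^\infty(\overline{D})$ and are bounded together with all their derivatives on the closed disc. The boundedness $L: H^{l+2}(D)\to H^l(D)$ then follows from the standard estimate that each $\partial^\alpha$ with $|\alpha|\le 2$ maps $H^{l+2}(D)\to H^l(D)$ and that multiplication by a $C^\infty(\overline{D})$ function is bounded on $H^l(D)$.

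The main obstacle is bookkeeping rather than anything deep. In (b) one must carry the substitution $\xi = 2\rho^2-1$ carefully enough to see the singular $\rho^{|m|-2}$ contributions cancel and to land \emph{exactly} on the Jacobi operator, and then recognize the algebraic identity for the eigenvalue. In (a) the only real subtlety is realizing that the $\rho^{-1}$ and $\rho^{-2}$ singularities are removable and that the clean route to boundedness is the Cartesian rewriting $L = -\Delta + E^2 + 2E$ with manifestly smooth (polynomial) coefficients, after which the Sobolev mapping property is routine.
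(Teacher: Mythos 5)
Your proposal is correct, and it is actually more self-contained than the paper's own proof. For part (a) you follow essentially the same route as the paper --- rewriting $Lu = -\Delta u + (\rho^2\pr^2 + 3\rho\pr)u$ --- but the paper simply asserts that both operators on the right are bounded from $H^{l+2}(D)$ to $H^l(D)$, whereas you justify this via the further identity $\rho^2\pr^2 + 3\rho\pr = E^2 + 2E$ with $E = x\px + y\partial_y$, exhibiting $L$ as a second-order operator with polynomial coefficients on $\overline{D}$; this closes a small gap the paper leaves implicit, since mapping properties of polar-coordinate operators on Sobolev spaces of the disc are not immediate without such a Cartesian rewriting. For part (b) the paper gives no argument at all, deferring to the references (Dunkl--Xu and Matsushima--Marcus); your direct verification is correct: the substitution $\xi = 2\rho^2-1$ does make the singular $\rho^{|m|-2}$ terms cancel, the resulting operator on $p = P_n^{(0,|m|)}$ is exactly $-4(1-\xi^2)p'' - 4\big[|m|-(|m|+2)\xi\big]p' + |m|(|m|+2)p$, and the Jacobi equation with $(\alpha,\beta)=(0,|m|)$ together with the algebraic identity $4n(n+|m|+1) + |m|(|m|+2) = (|m|+2n)(|m|+2n+2)$ yields the stated eigenvalue. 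The trade-off is clear: the paper's proof is shorter by outsourcing (b), while yours makes the lemma self-contained at the cost of the bookkeeping you anticipated.
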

\begin{proof}
	\begin{enumerate}
		\item[(a)] This follows easily from rewriting 
		\eqn{Lu = -\Delta u + (\rho^2\pr^2 + 3\rho \pr)u\nonumber}
		and using the fact that both operators above are bounded from $H^{l+2}(D)$ to $H^l(D)$ for any integer $l \geq 0$.
		
		\item[(b)] See \cite{dunkl2014orthogonal,matsushima1995spectral}.
	\end{enumerate}
	
\end{proof}

Observe that the operator $L$ defined above is self-adjoint in $L^2(D)$. This fact has a crucial bearing on the next approximation result. For $v \in H^s(D)$, define the projection \eqn{\mathcal{P}_{MN}v(\rho,\theta) = \sum_{|m|\leq M} \sum_{0 \leq n \leq N} a_{mn}\zeta_{mn}(\rho,\theta)\nonumber}
where $a_{mn} = \ip{\zeta_{mn},v}_{L^2(D)}$. The Stone-Weierstrass theorem and a standard density argument show that the $\{\zeta_{mn}\}$ form a basis for $L^2(D)$ and, as a result, we have $\lim_{M,N \to \infty}\mathcal{P}_{MN}v = v$ in the $L^2$ sense. The next theorem provides a precise estimate for the approximation error. The definition of $H^s(D)$ for real $s > 0$ and the interpolation theorem used in the following proof are stated in Appendix \ref{appa}. 

\begin{thm}\label{discerr}
	Let $s$ be a positive real number and let $v \in H^s(D)$. For $M,N \geq 0$, let $\mathcal{P}_{MN}v$ be the projection of $v$ on $\{\zeta_{mn}\}$ as described above. Then, there exists a constant $P_s$ such that
	\eqn{\norm{v - \mathcal{P}_{MN}v}_{L^2(D)} \leq P_s \min(M,2N)^{-s}\norm{v}_{H^s(D)}.\nonumber}
\end{thm}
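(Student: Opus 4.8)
The plan is to exploit the fact that, by Lemma~\ref{eigenf}, the Zernike polynomials diagonalize the self-adjoint operator $L$, so that integer powers of $L$ act as Sobolev-controlled multipliers on the Zernike expansion. I would first establish the estimate for even integer orders $s=2l$ and then recover all real $s>0$ by interpolation. For the setup, write $v=\sum_{m,n}a_{mn}\zeta_{mn}$ with $a_{mn}=\ip{\zeta_{mn},v}_{L^2(D)}$. Iterating part (a) of Lemma~\ref{eigenf} shows that $L^l:H^{2l}(D)\to L^2(D)$ is bounded, so there is a constant $C_l$ with $\norm{L^l v}_{L^2(D)}\leq C_l\norm{v}_{H^{2l}(D)}$. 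Since $L$ is self-adjoint with the $\{\zeta_{mn}\}$ as an orthonormal eigenbasis and eigenvalues $\lambda_{mn}=(|m|+2n)(|m|+2n+2)$, we also have the diagonal identity $\norm{L^l v}_{L^2(D)}^2=\sum_{m,n}\lambda_{mn}^{2l}|a_{mn}|^2$.

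The even-integer case is then essentially a tail estimate. The error is $\norm{v-\mathcal{P}_{MN}v}_{L^2(D)}^2=\sum_{|m|>M\text{ or }n>N}|a_{mn}|^2$, and the crucial observation is that every excluded mode satisfies $|m|+2n>\min(M,2N)$: if $|m|>M$ then $|m|+2n\geq|m|>M\geq\min(M,2N)$, while if $n>N$ then $|m|+2n\geq 2n>2N\geq\min(M,2N)$. Writing $p=|m|+2n$, we get $\lambda_{mn}=p(p+2)\geq p^2>\min(M,2N)^2$ on the tail. Inserting the factor $\lambda_{mn}^{-2l}\lambda_{mn}^{2l}$ and bounding $\lambda_{mn}^{-2l}<\min(M,2N)^{-4l}$ gives $\norm{v-\mathcal{P}_{MN}v}_{L^2(D)}^2\leq\min(M,2N)^{-4l}\sum_{m,n}\lambda_{mn}^{2l}|a_{mn}|^2=\min(M,2N)^{-4l}\norm{L^l v}_{L^2(D)}^2$. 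Taking square roots and combining with the boundedness of $L^l$ yields the claimed bound for $s=2l$ with $P_{2l}=C_l$.

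To reach arbitrary real $s>0$, I regard $T_{MN}:=I-\mathcal{P}_{MN}$ as a linear operator into $L^2(D)$. Orthogonality of the projection gives $\norm{T_{MN}}_{H^0\to L^2}\leq 1$, and the previous step gives $\norm{T_{MN}}_{H^{2l}\to L^2}\leq C_l\min(M,2N)^{-2l}$ for each $l$. Fixing $l=\lceil s/2\rceil$ so that $2l\geq s$, and applying the interpolation theorem of Appendix~\ref{appa} with $\theta=s/(2l)\in(0,1]$, the intermediate space is $H^s(D)$ and
\[
\norm{T_{MN}}_{H^s\to L^2}\leq\norm{T_{MN}}_{H^0\to L^2}^{1-\theta}\,\norm{T_{MN}}_{H^{2l}\to L^2}^{\theta}\leq C_l^{\theta}\min(M,2N)^{-2l\theta}=C_l^{\theta}\min(M,2N)^{-s},
\]
so the theorem follows with $P_s=\max(1,C_l)$.

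The step I expect to be the main obstacle is the interpolation: one must verify that the interpolation space between $L^2(D)=H^0(D)$ and $H^{2l}(D)$ is exactly $H^s(D)$ with equivalent norm — which relies on the regularity of $\partial D$ and is precisely what the definitions and theorem of Appendix~\ref{appa} are meant to supply — and that the elementary operator-norm interpolation inequality applies to the family $T_{MN}$. By contrast, the spectral setup and the even-integer estimate are essentially bookkeeping once the eigenfunction and boundedness properties of $L$ in Lemma~\ref{eigenf} are in hand.
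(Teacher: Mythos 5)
Your even-order argument is exactly the paper's: expand the tail over the excluded index set, use self-adjointness to write $a_{mn}=\lambda_{mn}^{-l}\ip{\zeta_{mn},L^l v}_{L^2(D)}$, bound $\lambda_{mn}\geq \min(M,2N)^2$ on that set, and invoke Lemma \ref{eigenf}(a) to control $\norm{L^l v}_{L^2(D)}$ by $\norm{v}_{H^{2l}(D)}$. That part is sound, and your observation that every excluded mode has $|m|+2n>\min(M,2N)$ is the same counting the paper uses.

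The gap is in the interpolation step, and it sits exactly where you suspected. You interpolate between $H^0(D)=L^2(D)$ and $H^{2l}(D)$ with $l=\lceil s/2\rceil$ and $\theta=s/(2l)$. But the interpolation theorem of Appendix \ref{appa} is stated only for the pair of \emph{consecutive} even orders $(H^{2k}(D),H^{2k+2}(D))$ with $s=2(k+\nu)$, $0<\nu<1$; indeed the fractional norm $\norm{\cdot}_{H^s(D)}$ is \emph{defined} there through the operator $S_k$ attached to that specific pair. Your step therefore requires the additional reiteration-type fact that the interpolation space between $L^2(D)$ and $H^{2l}(D)$ at parameter $s/(2l)$ coincides, with controlled equivalence constants, with $H^s(D)$ as defined in the appendix. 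That is a genuine extra result which neither the paper's framework nor your argument supplies --- the delicacy of such identifications is precisely the subject of the cited reference \cite{chandler_int}. For $0<s\leq 2$ (where $l=1$, so your endpoints are the consecutive pair $H^0,H^2$) your proof is complete; for non-even $s>2$ it is not. The repair is immediate and is what the paper does: you have already established $\norm{I-\mathcal{P}_{MN}}_{H^{2k}(D)\to L^2(D)}\leq P_{2k}\min(M,2N)^{-2k}$ for \emph{every} even order, so simply interpolate between the two consecutive even orders $2k<s<2k+2$ bracketing $s$, with $\nu=(s-2k)/2$; this gives $P_s=P_{2k}^{1-\nu}P_{2k+2}^{\nu}$ and exponent $-\bigl[2k(1-\nu)+(2k+2)\nu\bigr]=-s$, with no endpoint at $H^0$ and no reiteration needed.
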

\begin{proof}
	We follow the standard argument presented in \cite{bmaday97}. First suppose that $s = 2k$ for some integer $k \geq 1$. Note that $v - \mathcal{P}_{MN}v = \sum_{m,n \in \Lambda_{MN}} a_{mn}\zeta_{mn}$ where $\Lambda_{MN} = \{(m,n): |m| > M \mbox{ or } n > N\}$. From Lemma \ref{eigenf}(b), we have
	\eqn{
		a_{mn} &=& \ip{\zeta_{mn},v} = \lambda_{mn}^{-k}\ip{L^k\zeta_{mn},v} =  \lambda_{mn}^{-k}\ip{\zeta_{mn},L^kv}. \nonumber
	}
	It follows that
	\eqn{
		\norm{v - \mathcal{P}_{MN}v}_{L^2(D)}^2 &=& \sum_{m,n \in \Lambda_{MN}} |a_{mn}|^2 \nonumber\\
		&=& \sum_{m,n \in \Lambda_{MN}} \lambda_{mn}^{-2k}|\ip{\zeta_{mn} , L^kv}|^2 \nonumber\\
		&\leq& \min(M,2N)^{-4k} \sum_{m,n \in \Lambda_{MN}} |\ip{\zeta_{mn} , L^kv}|^2 \nonumber\\
		&\leq& \min(M,2N)^{-4k} \norm{L^kv}_{L^2(D)}^2 \nonumber
	} 
	where we used the fact that $\lambda_{mn} = (|m| + 2n)(|m| + 2n + 2) \geq (\min(M,2N))^2$ for $m,n \in \Lambda_{MN}$. From Lemma \ref{eigenf}(a), we have
	\eqn{\norm{L^kv}_{L^2(D)} \leq P_{2k}\norm{v}_{H^{2k}(D)}\nonumber}
	so we have the result in the case that $s = 2k$. 
	
	Next, let $s$ be a positive real number that is not an even integer and choose an integer $k$ such that $s = 2(k + \nu)$ for $0 < \nu < 1$. We have established that the operator $(I - \mathcal{P}_{MN})$ is continuous from $H^{2k}(D)$ to $L^2(D)$ with norm $P_{2k}\min(M,2N)^{-2k}$ and from $H^{2k+2}(D)$ to $L^2(D)$ with norm $P_{2k+2}\min(M,2N)^{-(2k+2)}$. Interpolating between these (see Appendix \ref{appa}), we deduce that it is bounded from $H^{s}(D)$ to $L^2(D)$ with norm bounded by $P_s\min(M,2N)^{-s}$, where $P_s = P_{2k}^{1-\nu}P_{2k+2}^{\nu}$.   
\end{proof}

Theorem \ref{discerr} shows that the rate of error decay is faster than any power of $\min(M,2N)^{-1}$. This is commonly termed spectral accuracy \cite{canuto06}. Also note that if $v$ has a finite highest angular frequency $m'$ so that $\ip{\zeta_{mn},v}_{L^2(D)} = 0$ for $m > m'$, then, by the same argument as above, the error decay occurs at rate $N^{-s}$, provided $M \geq m'$.

Next, we introduce a standard Chebyshev interpolation result that will allow us to study the approximation properties of the basis functions $\{\psi_{mnj}\}$ on $C$. Let $u \in H_{\sigma}^s((-h,0))$ and let $\{z_j\}_{0 \leq j \leq J}$ be the Chebyshev-Lobatto nodes on $(-h,0)$. Let $\ell_j$ be the $j$th Lagrange interpolating polynomial on these nodes and set 
\eqn{u_J(z) = \sum_{j = 0}^J u(z_j)\ell_j(z),\nonumber} 
that is, the $J$th Chebyshev interpolant for $u$. We then have the following result (Statement 5.5.22 from \cite{canuto06}).
\begin{lem}\label{zappr}
	Let $s , J\geq 0$ be integers. Take $u \in H_{\sigma}^s((-h,0))$ and let $u_J$ be the $J$th Chebyshev interpolant for $u$. Then, there exists a constant $Q_s$ such that
	\eqn{\norm{u - u_J}_{L_{\sigma}^2((-h,0))} \leq Q_sJ^{-s}\norm{u}_{H_{\sigma}^s((-h,0))}\nonumber} 
\end{lem}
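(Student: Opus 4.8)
The plan is to reduce the weighted estimate on $(-h,0)$ to the canonical Chebyshev interpolation bound on $[-1,1]$ and then recover the constant $Q_s$ by tracking the change of variables. First I would introduce the affine map $x = 1 + 2z/h$, which carries $(-h,0)$ onto $(-1,1)$ and sends the Chebyshev-Lobatto nodes $\{z_j\}$ to the standard Chebyshev-Gauss-Lobatto nodes $x_j = \cos(\pi j/J)$. A direct computation gives $\frac{-z}{h}\big(1+\frac{z}{h}\big) = \frac{1-x^2}{4}$, so the weight transforms as $\sigma(z)\,dz = \tfrac{h}{2}(1-x^2)^{-1/2}\,dx$; hence $L_{\sigma}^2((-h,0))$ becomes, up to the fixed factor $h/2$, the Chebyshev-weighted space $L^2_w(-1,1)$ with $w(x) = (1-x^2)^{-1/2}$. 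Since each $z$-derivative contributes a factor $2/h$, the norm $\norm{u}_{H_{\sigma}^s((-h,0))}$ is comparable to the weighted Sobolev norm of the pulled-back function, with constants depending only on $h$ and $s$. Because Lagrange interpolation at the nodes commutes with an affine change of variable, $u_J$ transforms into the Chebyshev interpolant of the pulled-back function, so the whole statement collapses to the estimate on the standard interval.

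On $[-1,1]$ I would argue in close parallel with the proof of Theorem \ref{discerr}. Writing the pulled-back function as $\tilde u = \sum_{k \geq 0} \hat u_k T_k$ in Chebyshev polynomials, the $T_k$ are the eigenfunctions of the singular Sturm-Liouville operator $\mathcal{L}v = -\sqrt{1-x^2}\,\big(\sqrt{1-x^2}\,v'\big)'$ with eigenvalues $k^2$, playing exactly the role that $L$ and its eigenvalues $\lambda_{mn}$ played in Lemma \ref{eigenf}. The $H^s_w$ regularity of $\tilde u$ then forces $\sum_k (1+k^2)^s |\hat u_k|^2$ to be finite and controlled by $\norm{\tilde u}_{H^s_w}^2$, so the Chebyshev coefficients decay in the sense governed by this weighted sum. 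I would split the error as $\tilde u - I_J\tilde u = (\tilde u - \Pi_J \tilde u) + (\Pi_J \tilde u - I_J \tilde u)$, where $\Pi_J$ is the orthogonal Chebyshev projection onto polynomials of degree at most $J$ and $I_J$ is the interpolant. The truncation term $\tilde u - \Pi_J\tilde u$ is the tail $\sum_{k > J}\hat u_k T_k$, which is bounded by $C J^{-s}\norm{\tilde u}_{H^s_w}$ by precisely the eigenvalue-weight argument used for Theorem \ref{discerr}.

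The genuinely new ingredient, and the step I expect to be the main obstacle, is the aliasing term $\Pi_J\tilde u - I_J\tilde u$. Unlike the orthogonal projection $\mathcal{P}_{MN}$ of the previous theorem, the interpolant $I_J$ is not orthogonal, so its error is not simply the tail of the series. Here I would invoke the discrete orthogonality of the $T_k$ at the Gauss-Lobatto nodes, through the identity $T_k(x_j) = \cos(k\pi j/J)$, to express the discrete coefficients as $\hat u_k$ plus aliased contributions $\hat u_{k'}$ with $k' \equiv \pm k \pmod{2J}$, and then bound the aliasing error in $L^2_w$ by these high-frequency tails. A Cauchy-Schwarz step against the weights $(1+k'^2)^{-s}$ together with the summability of the tail again yields a bound of order $J^{-s}\norm{\tilde u}_{H^s_w}$; the delicate part is the careful bookkeeping of which high modes alias onto each retained mode so that no power of $J$ is lost. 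Combining the truncation and aliasing bounds, undoing the change of variables, and absorbing all $h$- and $s$-dependent constants into a single $Q_s$ gives the claim. Since the underlying one-dimensional estimate is exactly Statement 5.5.22 of \cite{canuto06}, in the final writeup I would present the change-of-variables reduction in full and cite the book for the canonical interpolation bound.
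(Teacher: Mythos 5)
Your proposal is correct and takes essentially the same route as the paper: the paper notes the very same affine map $x = 1 + 2z/h$ (which turns $\sigma$ into the Chebyshev weight $(1-x^2)^{-1/2}$) and then simply quotes the interpolation estimate as Statement 5.5.22 of \cite{canuto06}, which is exactly your final step. Your truncation-plus-aliasing sketch is just the standard textbook proof of that quoted result, so nothing in your argument diverges from the paper's treatment.
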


Recall that $C = D \times (-h,0)$ is the flat cylinder. For $w \in H^s_{\sigma}(C) = H^s(D) \otimes H^s_{\sigma}((-h,0))$, define the projection-interpolant $w_{MNJ}$ by
\eqn{w_{MNJ}(\rho,\theta,z) = \sum_{0 \leq j \leq J} \mathcal{P}_{MN}w(\rho,\theta,z_j) \ell_j(z).\nonumber}
We next combine the approximation estimate Theorem \ref{discerr} for Zernike polynomials on $D$ and Lemma \ref{zappr} along the $z$-axis to obtain approximation estimates for the projection-interpolant on the entire cylinder. 
\begin{thm}\label{cylapp}
	Let $s , M,N,J \geq 0$ be integers. Let $w \in H^s(C)$ and let $w_{MNJ}$ be the corresponding projection-interpolant. Then, there exists a constant $R_s$ such that
	\eqn{\norm{w - w_{MNJ}}_{L^2_{\sigma}(C)} \leq R_s(\min(M,2N)^{-s} + J^{-s})\norm{v}_{H^s_{\sigma}(C)}. \nonumber}
\end{thm}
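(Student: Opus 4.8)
The plan is to treat the projection-interpolant as a composition of two commuting operators acting on disjoint sets of variables, and then split the error by the triangle inequality. Write $\mathcal{I}_J$ for the Chebyshev interpolation operator in $z$ at the Lobatto nodes $\{z_j\}$, so that by the definition of $w_{MNJ}$ we have $w_{MNJ} = \mathcal{I}_J \mathcal{P}_{MN} w$. Since $\mathcal{P}_{MN}$ acts only on $(\rho,\theta)$ and $\mathcal{I}_J$ only on $z$, they commute, and I would decompose
\[ w - w_{MNJ} = \left(w - \mathcal{P}_{MN}w\right) + \left(I - \mathcal{I}_J\right)\mathcal{P}_{MN}w, \]
estimating the two summands separately. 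This asymmetric splitting is preferable to writing $(I-\mathcal{I}_J)w + \mathcal{I}_J(I-\mathcal{P}_{MN})w$ because it lets me apply Lemma \ref{zappr} directly to $\mathcal{P}_{MN}w$ and so never requires a bound on the (log-growing) Lebesgue constant of $\mathcal{I}_J$.

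For the first summand I would freeze $z$ and apply the disc estimate Theorem \ref{discerr} on each horizontal slice. Since no $z$-derivative is involved,
\[ \norm{w - \mathcal{P}_{MN}w}_{L^2_\sigma(C)}^2 = \int_{-h}^0 \norm{w(\cdot,\cdot,z) - \mathcal{P}_{MN}w(\cdot,\cdot,z)}_{L^2(D)}^2\, \sigma(z)\,dz, \]
and Theorem \ref{discerr} bounds the inner norm by $P_s \min(M,2N)^{-s}\norm{w(\cdot,\cdot,z)}_{H^s(D)}$. Pulling out the $z$-independent factor $\min(M,2N)^{-2s}$ leaves $\int_{-h}^0 \norm{w(\cdot,\cdot,z)}_{H^s(D)}^2 \sigma(z)\,dz$, which is dominated by $\norm{w}_{H^s_\sigma(C)}^2$.

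For the second summand I would instead freeze $(\rho,\theta)$ and apply the one-dimensional interpolation estimate Lemma \ref{zappr} to the slice $z \mapsto (\mathcal{P}_{MN}w)(\rho,\theta,z)$, producing a factor $Q_s J^{-s}$ times $\norm{(\mathcal{P}_{MN}w)(\rho,\theta,\cdot)}_{H^s_\sigma((-h,0))}$; integrating over $D$ then yields $\int_D \norm{(\mathcal{P}_{MN}w)(\rho,\theta,\cdot)}_{H^s_\sigma((-h,0))}^2\, d\mu$ with $d\mu$ the disc measure $\pi^{-1}\rho\,d\rho\,d\theta$. The crucial observation is that $\mathcal{P}_{MN}$ is an integral operator in $(\rho,\theta)$ that does not involve $z$, so $\partial_z^k \mathcal{P}_{MN} = \mathcal{P}_{MN}\partial_z^k$ for $0 \le k \le s$; combined with the fact that $\mathcal{P}_{MN}$ is an orthogonal projection on $L^2(D)$, hence a contraction, this gives $\norm{(\mathcal{P}_{MN}\partial_z^k w)(\cdot,\cdot,z)}_{L^2(D)} \le \norm{(\partial_z^k w)(\cdot,\cdot,z)}_{L^2(D)}$ for every $z$, so the integrated quantity is again controlled by $\norm{w}_{H^s_\sigma(C)}^2$. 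Taking $R_s$ to be (a constant times) the larger of $P_s$ and $Q_s$ then delivers the stated bound, with the integer restriction on $s$ inherited from Lemma \ref{zappr}.

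The step I expect to require the most care is pinning down the tensor-product norm on $H^s(D)\otimes H^s_\sigma((-h,0))$ precisely enough to justify the two domination inequalities invoked above, namely that the $z$-integrated disc norm and the disc-integrated $z$-norm are each bounded by the full cylinder norm. Each of these involves only pure derivatives (all in the disc, or all in $z$), whereas the tensor-product norm also carries the mixed derivative terms, so the inequalities should hold with constant one once the norm is expanded in a common Zernike/Chebyshev basis. Beyond that, the only genuinely technical points are the Fubini interchange between integration in $(\rho,\theta)$ and in $z$, and the verification that $(\mathcal{P}_{MN}w)(\rho,\theta,\cdot)$ lies in $H^s_\sigma((-h,0))$ for almost every $(\rho,\theta)$, both of which follow from the commutation and contraction properties of $\mathcal{P}_{MN}$ together with the membership $w \in H^s_\sigma(C)$.
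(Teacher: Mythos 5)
Your proposal is correct and follows essentially the same route as the paper's proof: the identical asymmetric splitting $w - w_{MNJ} = (w - \mathcal{P}_{MN}w) + (I - \mathcal{I}_J)\mathcal{P}_{MN}w$, with Theorem \ref{discerr} applied slice-wise in $z$ to the first term and Lemma \ref{zappr} applied to $\mathcal{P}_{MN}w$ for the second, concluding with $R_s = \max\{P_s, Q_s\}$. The commutation and contraction properties of $\mathcal{P}_{MN}$ that you spell out are exactly what the paper uses implicitly in bounding $\norm{\mathcal{P}_{MN}w}_{L^2(D)\otimes H^s_\sigma((-h,0))}$ by the norm of $w$.
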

\begin{proof}
	Observe that
	%  \eqn{
	\begin{equation}
	\begin{aligned}
	\norm{w - w_{MNJ}}_{L^2_{\sigma}(C)} &\leq \norm{w - \mathcal{P}_{MN}w}_{L^2(D) \otimes L_{\sigma}^2((-h,0))} \\
	&\qquad + \norm{\mathcal{P}_{MN}w - w_{MNJ}}_{L^2(D) \otimes L_{\sigma}^2((-h,0))}
	\end{aligned}  \label{trineq}
	\end{equation}
	We have, by Theorem \ref{discerr},
	\eqn{
		\norm{ w - \mathcal{P}_{MN}w}_{{L^2(D)} \otimes L_{\sigma}^2((-h,0))} &\leq& P_s\min(M,2N)^{-s}\norm{w}_{H^s(D) \otimes L_{\sigma}^2((-h,0))} \nonumber\\
		&\leq&  P_s\min(M,2N)^{-s}\norm{w}_{H_{\sigma}^s(D) \otimes H^s((-h,0))} \nonumber
	}
	and, by Lemma \ref{zappr},
	\eqn{
		\norm{\mathcal{P}_{MN}w - w_{MNJ}}_{L_{\sigma}^2((-h,0))\otimes L^2(D)} &\leq& Q_sJ^{-s} \norm{\mathcal{P}_{MN}w}_{L^2(D) \otimes H_{\sigma}^s((-h,0))} \nonumber\\
		&\leq& Q_sJ^{-s}\norm{w}_{H^s(D) \otimes H_{\sigma}^s((-h,0))}. \nonumber
	}
	Putting these together in (\ref{trineq}) and setting $R_s = \max\{P_s,Q_s\}$ gives the desired result.
	
\end{proof}

The approximation estimate can be used to yield a convergence proof for our computational method. We refer the reader to \cite{nicshen09} for details of the proof. We first have the analyticity result for the transformed field expansion (Theorem 3.1 of \cite{nicshen09})
\begin{thm}\label{analytic}
	Given an integer $s \geq 1$, if $q \in H^{s + 3/2}(D)$ and $f \in H^{s + 2}(D)$, then there exist constants $E_1,E_2 > 0$ such that
	\eqn{\norm{u_k}_{H^{s+2}_{\sigma}(C)} \leq E_1 \norm{q}_{H^{s+3/2}(D)}B^k\nonumber}
	for any constant $B \geq E_2\norm{f}_{H^{s+2}(D)}$.  
\end{thm}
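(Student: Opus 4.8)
The plan is to follow the boundary-perturbation analysis of Nicholls and Shen \cite{nicshen09} and prove the geometric bound by induction on $k$, using a single elliptic regularity estimate for the flattened Poisson problem (\ref{diveqn4a})--(\ref{bcs2}) as the engine. The estimate I would establish first is that the solution operator for
\[
-h^2\Delta w = r, \qquad w|_{z=0} = g, \quad \pz w|_{z=-h} = 0, \quad \pr w|_{\rho=1} = \chi
\]
on the flat cylinder $C = D\times(-h,0)$ satisfies
\[
\norm{w}_{H^{s+2}_\sigma(C)} \leq C_s\Big(\norm{r}_{H^s_\sigma(C)} + \norm{g}_{H^{s+3/2}(D)} + \norm{\chi}_{H^{s+1/2}_\sigma(S)}\Big),
\]
where $S=\partial D\times(-h,0)$ is the lateral wall and $H^{s+1/2}_\sigma(S)$ is the natural trace space. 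Since $C$ is a product domain and the three boundary conditions decouple the $z$-ends from the radial wall, I would prove this by separating variables: expanding in an eigenbasis of $-\Delta_H$ on $D$ (after lifting $\chi$ to homogenize the lateral condition) reduces the problem to a family of Helmholtz two-point boundary value problems in $z$, whose solution operators can be bounded in the weighted one-dimensional spaces $H^s_\sigma((-h,0))$; summing over horizontal modes with Parseval and the growth of the horizontal eigenvalues then recovers the full $H^{s+2}_\sigma(C)$ control.

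Granting this estimate, the induction is routine. For $k=0$ we have $r_0=0$, $\chi_0=0$ and Dirichlet data $q$, so the estimate gives $\norm{u_0}_{H^{s+2}_\sigma(C)} \leq C_s\norm{q}_{H^{s+3/2}(D)}$; I set $E_1=C_s$. For the step, I would bound the forcing and flux defined in (\ref{diveqn4})--(\ref{chik}) in terms of the two previous iterates. Reading off the expanded form (\ref{rk}), every term of $r_k$ is a coefficient built from $f$ and its first two horizontal derivatives (times a smooth, bounded factor $(h+z)^j$ that is harmless) multiplied by a derivative of $u_{k-1}$ or $u_{k-2}$ of total order at most two. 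Using that the bilinear map $H^{t_1}(D)\times H^{t_2}(D)\to H^{t_0}(D)$ is bounded whenever $t_1,t_2\geq t_0\geq 0$ and $t_1+t_2-t_0>1$ (so that, for $s\geq 1$, multiplication by $f$ and its controlled derivatives is bounded into $H^s$), together with the trace theorem on $S$, I obtain
\[
\norm{r_k}_{H^s_\sigma(C)} \leq C\,\norm{f}_{H^{s+2}(D)}\norm{u_{k-1}}_{H^{s+2}_\sigma(C)} + C\big(\norm{f}_{H^{s+2}(D)}+\norm{f}_{H^{s+2}(D)}^2\big)\norm{u_{k-2}}_{H^{s+2}_\sigma(C)}
\]
and $\norm{\chi_k}_{H^{s+1/2}_\sigma(S)} \leq C\,\norm{f}_{H^{s+2}(D)}\norm{u_{k-1}}_{H^{s+2}_\sigma(C)}$.

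Writing $N=\norm{f}_{H^{s+2}(D)}$ and $U_j=\norm{u_j}_{H^{s+2}_\sigma(C)}$, and noting the Dirichlet data vanishes for $k\geq 1$, the elliptic estimate combines with the two displays to give $U_k \leq C_sC\big[N\,U_{k-1} + (N+N^2)U_{k-2}\big]$. Assuming inductively that $U_j \leq E_1\norm{q}_{H^{s+3/2}(D)}B^j$ for $j<k$, this reads
\[
U_k \leq E_1\norm{q}_{H^{s+3/2}(D)}\,B^k\cdot C_sC\Big(\frac{N}{B}+\frac{N+N^2}{B^2}\Big).
\]
If $B\geq E_2 N$ with $E_2\geq 1$ chosen large enough (depending only on $C_s$, $C$ and $s$) that the parenthetical factor times $C_sC$ is at most $1$, the bound propagates, closing the induction and establishing the theorem; the $k=1$ case uses only the $U_0$ term and is handled identically.

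The hard part is the weighted elliptic estimate, not the bookkeeping. Two points require care. First, the weight $\sigma$ is the Chebyshev weight, which blows up at the two ends $z=0,-h$; controlling the $z$-ODE solution operators in the spaces $H^s_\sigma$ (rather than the unweighted $H^s$) is where the separation-of-variables reduction pays off, since the weight is exactly the one adapted to these endpoints. Second, and more seriously, $C$ has two circular edges $\{\rho=1,\ z=0\}$ and $\{\rho=1,\ z=-h\}$ where the boundary-condition type changes; at the top edge a Dirichlet face meets a Neumann face, and such mixed junctions are the classical source of elliptic regularity loss. I expect the product structure of $C$ and the matched choice of weight to confine this issue to the one-dimensional $z$-factor, but verifying that the full $H^{s+2}_\sigma$ regularity survives the junction (equivalently, that the lateral Neumann trace map into $H^{s+1/2}_\sigma(S)$ is bounded) is the step I would budget the most effort for.
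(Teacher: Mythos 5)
Your proof architecture --- a weighted elliptic estimate for the flat-cylinder Poisson problem, Sobolev product bounds on $r_k$ and $\chi_k$, and an induction yielding the geometric bound --- is the right one; indeed it is the architecture of the proof the paper actually points to, since the paper proves nothing here and simply cites Theorem 3.1 of \cite{nicshen09}. However, your induction contains a concrete error that makes its final step fail. You bound $\norm{r_k}_{H^s_\sigma(C)}$ by $C\,N\,U_{k-1} + C(N+N^2)\,U_{k-2}$ with $N = \norm{f}_{H^{s+2}(D)}$, and then claim $E_2$ can be chosen depending only on $C_s$, $C$ and $s$. But the bound must hold for every $B \geq E_2 N$, in particular for $B = E_2 N$, and there your parenthetical factor contains $C_sC\,N/B^2 = C_sC/(E_2^2 N)$, which is unbounded as $N \to 0$: no choice of $E_2$ uniform in $f$ can make it at most $1$, so the induction breaks precisely in the small-amplitude regime where the TFE is most used. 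Uniformity of $E_1,E_2$ in $f$ is the entire content of the theorem --- it is what makes $B\epsilon = E_2\norm{\eta}_{H^s(D)}$ the radius-of-convergence criterion later in the paper; if the constants were allowed to depend on $f$, the statement would be vacuous. The fix is to use the actual structure of (\ref{diveqn4}) and (\ref{rk}): $A_2(f)$, and the pairings $f_\rho B_1(f)$ and $\rho^{-1}f_\theta C_1(f)$, are all quadratic in $f$, so every coefficient multiplying a derivative of $u_{k-2}$ is one of $f^2$, $f\nabla_H f$, $f\Delta_H f$, $|\nabla_H f|^2$; there is no term linear in $f$ against $u_{k-2}$. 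The correct recursion is therefore $U_k \leq C_sC\bigl(N U_{k-1} + N^2 U_{k-2}\bigr)$, the factor becomes $C_sC\bigl(E_2^{-1} + E_2^{-2}\bigr)$ uniformly in $N$, and the induction closes.

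A further caveat concerns the elliptic estimate you defer. It cannot simply be transplanted from \cite{nicshen09}, because their geometry is horizontally periodic: there is no lateral wall, hence no edge where the boundary condition changes type. The Dirichlet--Neumann junction at $\{\rho=1,\,z=0\}$ that you flag is a genuinely new feature of the cylindrical problem (one the paper itself passes over in silence by citing the periodic result), and the recursion also requires checking that the data $q$, $\chi_k$ produced at each stage satisfy whatever edge compatibility conditions the $H^{s+2}_\sigma$ estimate demands. Moreover, your proposed route --- expand in the Neumann eigenbasis of $-\Delta_H$ on $D$ (Bessel modes) and sum with Parseval --- controls powers of $-\Delta_H$ applied to the solution, and this is equivalent to full $H^{s+2}(D)$ regularity only under the compatibility conditions $\pr \Delta_H^k(\cdot)|_{\rho=1}=0$; this is exactly the obstruction the paper documents when explaining why Bessel expansions of smooth functions converge only algebraically. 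So the mode-summation step needs a genuine argument (a direct a priori estimate, or careful lifting and commutation) rather than a plain Parseval identity; as you rightly anticipate, this is where the real work for the cylinder lies.
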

This result demonstrates that the transformed field expansion $\sum_{k = 0}^{\infty} \epsilon^k u_k$ converges for $B\epsilon < 1$. As a result, the technique is guaranteed to yield the exact solution $u$ of (\ref{diveqn}). We only need to show that our numerical solution converges in an appropriate sense to $u$.

Let $u^k_{MNJ}$ be the solutions to the Poisson problems (\ref{diveqn4a}) obtained from the spectral method. In addition, let 
\eqn{u_{KMNJ} = \sum_{k = 0}^K \epsilon^k u^k_{MNJ}\nonumber}
denote the numerical approximation to $u$.  We then have the following convergence result. 
\begin{thm}\label{uconv}
	Assume $f \in H^s(D)$ and $q \in H^{s-1/2}(D)$ for some integer $s \geq 3$. Then
	\begin{equation}
	\norm{u - u_{KMNJ}}_{L_{\sigma}^2(C)} \leq (B\epsilon)^{K+1} + R_s(\min(M,2N)^{-s} + J^{-s})\norm{q}_{H^{s-1/2}(D)} \label{uconvthm}
	\end{equation}
	for any constant $B \geq E_2\norm{f}_{H^s(D)}$ such that $B\epsilon < 1$, where $E_2$ is the constant from Lemma \ref{analytic}, and $R_s$ is the constant from Theorem \ref{cylapp}.
\end{thm}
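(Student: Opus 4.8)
The plan is to split the total error into the truncation error of the transformed field expansion and the per-term discretization error, and then to bound each with the results already available. Writing $u_k$ for the exact solutions of the Poisson problems (\ref{diveqn4a}) and inserting the exact partial sum $\sum_{k=0}^K \epsilon^k u_k$, the triangle inequality gives
$$\norm{u - u_{KMNJ}}_{L^2_\sigma(C)} \le \norm{u - \sum_{k=0}^K \epsilon^k u_k}_{L^2_\sigma(C)} + \sum_{k=0}^K \epsilon^k \norm{u_k - u^k_{MNJ}}_{L^2_\sigma(C)}.$$
The first term is the tail of the field expansion; the second measures, level by level, how well the spectral method resolves each $u_k$.

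For the tail I would invoke Theorem \ref{analytic} with its smoothness index set to $s-2$, which is legitimate since $s \ge 3$ forces $s-2 \ge 1$, and which matches the hypotheses $q \in H^{s-1/2}(D)$, $f \in H^s(D)$. This yields $\norm{u_k}_{H^s_\sigma(C)} \le E_1 \norm{q}_{H^{s-1/2}(D)} B^k$ for every $B \ge E_2\norm{f}_{H^s(D)}$. Hence, when $B\epsilon < 1$, the tail equals $\norm{\sum_{k>K}\epsilon^k u_k}_{L^2_\sigma(C)} \le E_1\norm{q}_{H^{s-1/2}(D)} \sum_{k>K}(B\epsilon)^k$, a convergent geometric series summing to $(B\epsilon)^{K+1}/(1-B\epsilon)$ times a constant; absorbing the bounded factor $E_1/(1-B\epsilon)$ produces the stated $(B\epsilon)^{K+1}$ contribution.

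For the discretization term the key ingredient is Theorem \ref{cylapp}. Applied to $w = u_k \in H^s_\sigma(C)$ it bounds the projection-interpolant error by $\norm{u_k - (u_k)_{MNJ}}_{L^2_\sigma(C)} \le R_s(\min(M,2N)^{-s} + J^{-s})\norm{u_k}_{H^s_\sigma(C)}$, and the same $H^s$ estimate from Theorem \ref{analytic} turns the right-hand side into $R_s(\min(M,2N)^{-s}+J^{-s})E_1\norm{q}_{H^{s-1/2}(D)} B^k$. Summing against $\epsilon^k$ gives one more geometric series $\sum_{k=0}^K (B\epsilon)^k \le (1-B\epsilon)^{-1}$, so after folding the bounded constants into $R_s$ we recover exactly the $R_s(\min(M,2N)^{-s}+J^{-s})\norm{q}_{H^{s-1/2}(D)}$ term. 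Combining the two contributions yields (\ref{uconvthm}).

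The one genuine obstacle is the replacement of the computed Galerkin solution $u^k_{MNJ}$ by the projection-interpolant $(u_k)_{MNJ}$, to which Theorem \ref{cylapp} actually speaks; that is, justifying $\norm{u_k - u^k_{MNJ}}_{L^2_\sigma(C)} \lesssim \norm{u_k - (u_k)_{MNJ}}_{L^2_\sigma(C)}$. This is not automatic, because the scheme solves the discrete system (\ref{system}) whose data $r_k$ and $\chi_k$ (see (\ref{diveqn4}), (\ref{chik})) are themselves built from the previously computed approximations $u^{k-1}_{MNJ}$ and $u^{k-2}_{MNJ}$, so discretization errors propagate through the three-term recurrence. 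I would close this gap by (i) establishing stability and quasi-optimality of the discrete solve — the symmetric positive-definite stiffness structure $A(m) = W(m)D(m)^2W(m)^T$ together with the Cholesky factors supplies the coercivity needed for a C\'ea-type bound of the Galerkin error by the best-approximation error — and (ii) running an induction on $k$ in which the bounded amplification of error at each level is swallowed by the geometric growth $B^k$ already present in the estimates. For the clean statement above this propagation is subsumed into the constants, and a complete accounting following the template of \cite{nicshen09} finishes the argument.
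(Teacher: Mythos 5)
Your proposal is correct and follows essentially the same route as the paper, whose own ``proof'' is merely the remark that Theorem \ref{analytic} and Theorem \ref{cylapp} are to be combined as in the proof of Theorem 2.1 of \cite{nicshen09}; that reference carries out exactly your decomposition into expansion-tail plus per-order discretization error, with the geometric-series bounds and an induction on $k$ to control how errors in $r_k$ and $\chi_k$ propagate through the three-term recurrence. Your explicit identification of the gap between the computed Galerkin solution $u^k_{MNJ}$ and the projection-interpolant $(u_k)_{MNJ}$, and the plan to close it via stability of the discrete solve plus induction, is precisely the content the paper delegates to \cite{nicshen09}.
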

The proof combines the analyticity result from Theorem \ref{analytic} and the approximation estimate on the cylinder from Theorem \ref{cylapp}. See the proof of Theorem 2.1 of \cite{nicshen09} for details on how to combine these.

\section{Numerical Results}

We first numerically confirm the spectral accuracy of the modal representation. For $k \geq 0$ and any $\alpha > 0$, let \eqn{f_k(\rho,\theta) = e^{-\alpha \rho^2} \rho^k \cos(k\theta). \nonumber}
The coefficients $\ip{\zeta_{mn},f}_{L^2(D)}$ in the Zernike representation of $f$ can be computed by using a high-order Gauss-Jacobi quadrature rule. Theorem \ref{discerr} predicts that the error $\norm{f - \mathcal{P}_{MN}f}_{L^2(D)}$ will decay faster than any power of $N^{-1}$, provided that $M \geq k$. Figure \ref{zererrs:1} confirms the spectral decay for multiple values of $k$ and $\alpha$ with $1 \leq N \leq 30$ and $M = 16$. In order to show that this representation avoids spurious behavior, we have shown the $L^{\infty}$ errors. These were computed by sampling the functions on a fine mesh consisting of 14230 points. That the $L^2$ errors behave similarly follows from this since the domain is bounded.

\def \sclo {0.54}
\def \sclt {0.418}
\def \sclta {0.33}

\begin{figure}
	\centering
	\subfigure[$L^{\infty}$ errors for smooth functions]
	{\includegraphics[scale=\sclta,trim=0 0 40 0,clip]{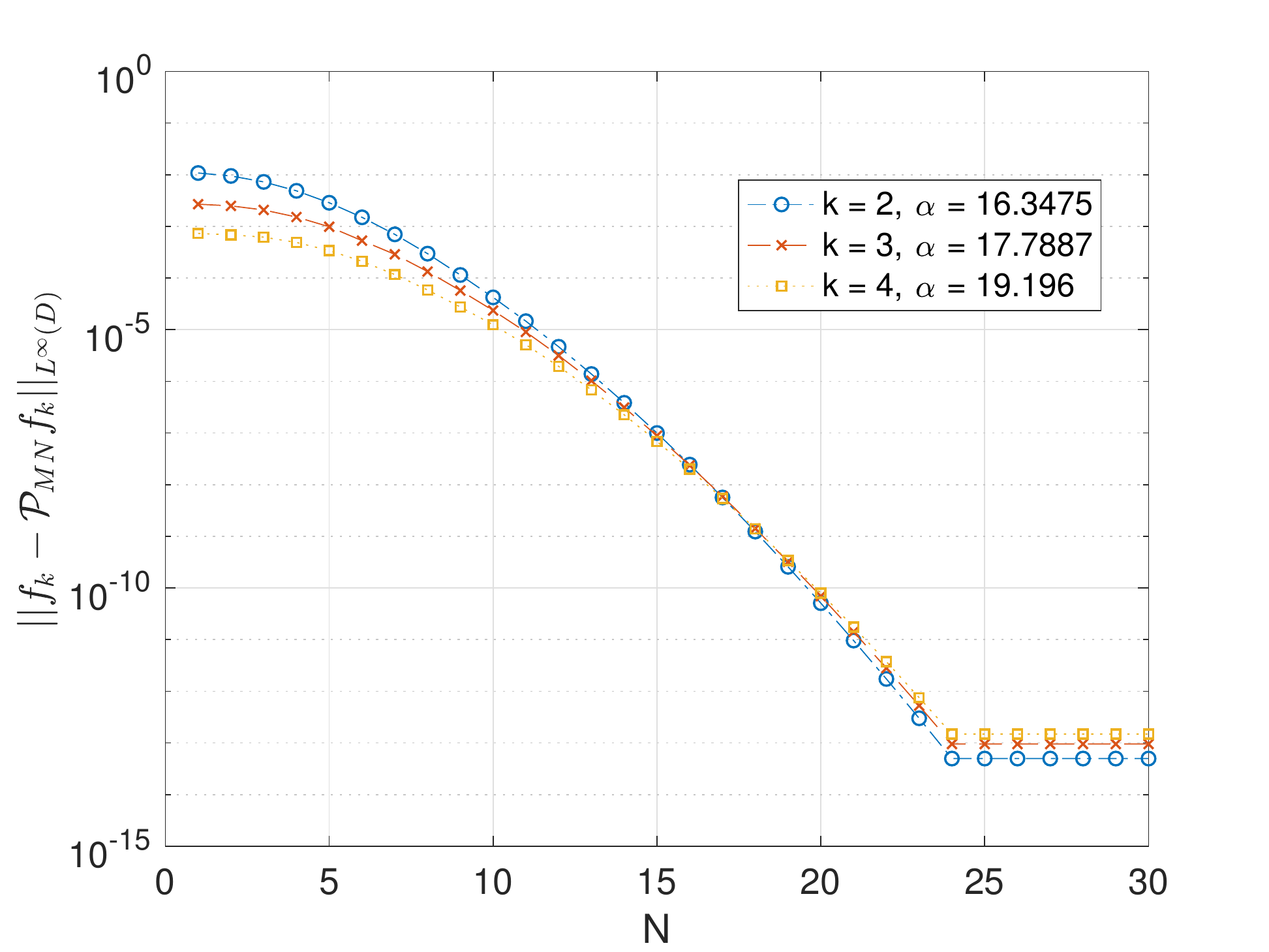}
		\label{zererrs:1}
	}
	\subfigure[$L^{2}$ errors for rough functions]
	{\includegraphics[scale=\sclta,trim=0 0 40 0,clip]{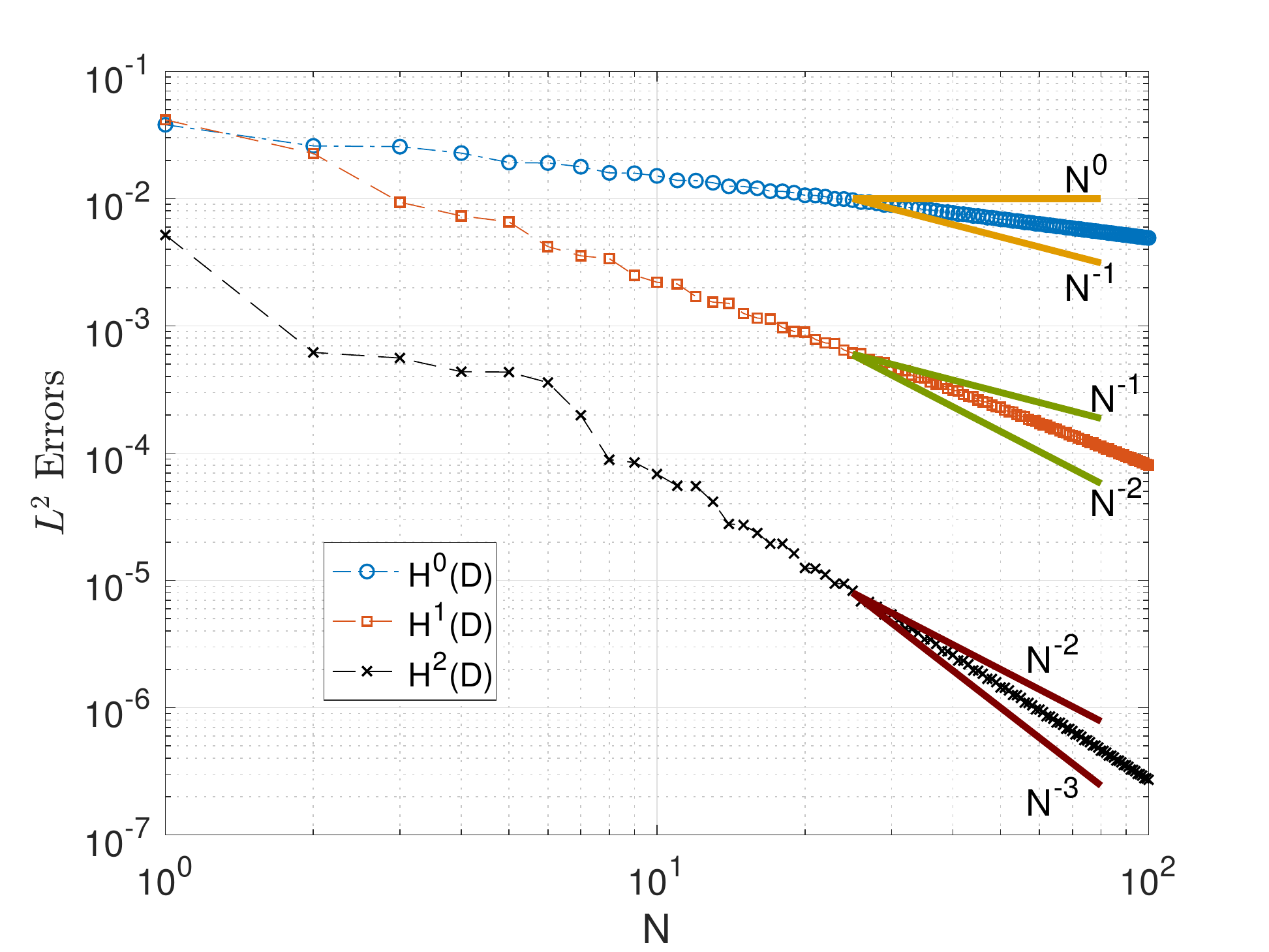}
		\label{zererrs:2}
	}
	\caption{(a) $L^\infty$ errors in the Zernike representation of $f_k(\rho,\theta) = e^{-\alpha \rho^2}\rho^k \cos(k \theta)$ vs. $N$. Observe that the errors decay super-algebraically (in fact exponentially) in $N$, as established in Theorem \ref{discerr}, since $f$ is infinitely differentiable, and that the same holds for pointwise errors. 
		(b) $L^2$ error plots for Zernike representations of functions that belong to $H^0(D)$, $H^1(D)$ and $H^2(D)$. The convergence is algebraic and controlled by $N^{-s}$ for an $H^s$ function. 
	}\label{zererrs}
\end{figure}

As further validation of Theorem \ref{discerr}, in Figure \ref{zererrs:2} we show the $L^2$ error plots for the Zernike representations of non-smooth functions. We consider, in turn, functions whose radial components have a jump discontinuity, a cusp and a discontinuous second derivative. As a result, these belong to the spaces $H^0(D)$, $H^1(D)$ and $H^2(D)$ respectively. The error decay plots are in agreement with Theorem \ref{discerr}: the convergence rate is at least $N^{-s}$ for a function belonging to $H^s(D)$.

Next, we reiterate the advantages of Zernike polynomials over Bessel functions for representational purposes. First, let $J_m$ be the $m$th Bessel function of order zero and $a_{mn}$ the $n$th positive zero of $J_m'(\cdot)$. It follows from the orthogonality of $\{J_m(a_{mn}\rho)\}_{n \geq 1}$ that any square integrable function $g$ on the unit disc can be represented in $L^2(D)$ as
\eqn{g(\rho,\theta) = \sum_{m \geq 0, n \geq 1} \beta_{mn}J_{m}(a_{mn}\rho)e^{im\theta},\nonumber}
where 
\eqn{\beta_{mn} = \left[2\pi  \int_0^1 (J_m(a_{mn}\rho))^2 \ \rho \ d\rho \right]^{-1} \int_0^{2\pi}\int_0^1 g(\rho,\theta) J_m(a_{mn}\rho) e^{-im\theta} \ \rho \ d\rho \ d\theta.\nonumber}
In order to compare the two representational techniques, we represent functions from one family in terms of the other and vice versa. More precisely, for testing the Bessel representation, consider
\eqn{
	g_{m'n'}(\rho,\theta) = \zeta_{m'n'}(\rho,\theta) - h_{m'}(\rho,\theta) \label{zern}
}
where
\eqn{
	h_{m'}(\rho,\theta) = \left\{\begin{matrix}
		\left(\frac{2n'(m'+n'+1)}{m'} + 1\right)\sqrt{\frac{1+m'+2n'}{1+m'}}\zeta_{m'0}(\rho,\theta) & , & m' > 0 \\
		\left(\frac{2n'(n'+1)}{4}\right)\sqrt{\frac{1+2n'}{3}}\zeta_{01} (\rho,\theta)& , & m' = 0
	\end{matrix} \right.
}
for any $(m',n')$. The corrections $h_{m'}$ ensure that $\pr g_{m'n'}|_{\rho = 1} = 0$, in agreement with the Bessel functions used in the representation. Figure \ref{jacbescom} displays the results for the $L^{\infty}$ norm. The plots show that the error decay for the Bessel function representation is algebraic, as opposed to the spectral accuracy possessed by Zernike polynomials. An intuitive reason for this is that Bessel functions are not as oscillatory as Zernike polynomials near $\rho = 1$ and, as a result, are less accurate close to the outer boundary. A useful analog is the comparison of a Fourier sine-series on $[0,\pi]$ with a Chebyshev expansion. The zeros of the latter cluster near the boundaries like $1/n^2$, where $n$ is the mode number, and yield spectrally accurate representations. Meanwhile, the zeros of the former cluster like $1/n$ and lead to algebraic decay of mode amplitudes. More concretely, as established in \cite{boydyu}, the error decay for the Bessel representation of a function $g$ is controlled by the highest integer $p \geq 0$ for which $\pr \Delta^k g|_{\rho = 1} = 0$ for $0 \leq k \leq p-1$; in this case, the asymptotic rate is $O(N^{-(2p+1/2)})$. Since this condition is unlikely to hold for all integer values of $p$, a spectral decay rate is seldom exhibited. On the other hand, no such condition is required for Zernike polynomials, as proven in Theorem \ref{discerr} and illustrated in Figure \ref{zererrs}. As a result, a super-algebraic rate of convergence is obtained for all smooth functions while an algebraic rate only shows up for non-smooth functions.

\begin{figure}
	\centering
	\subfigure[Bessel expansion of $g_{m'n'}(\rho,\theta)$]
	{\includegraphics[scale=\sclta,trim=0 0 40 0,clip]{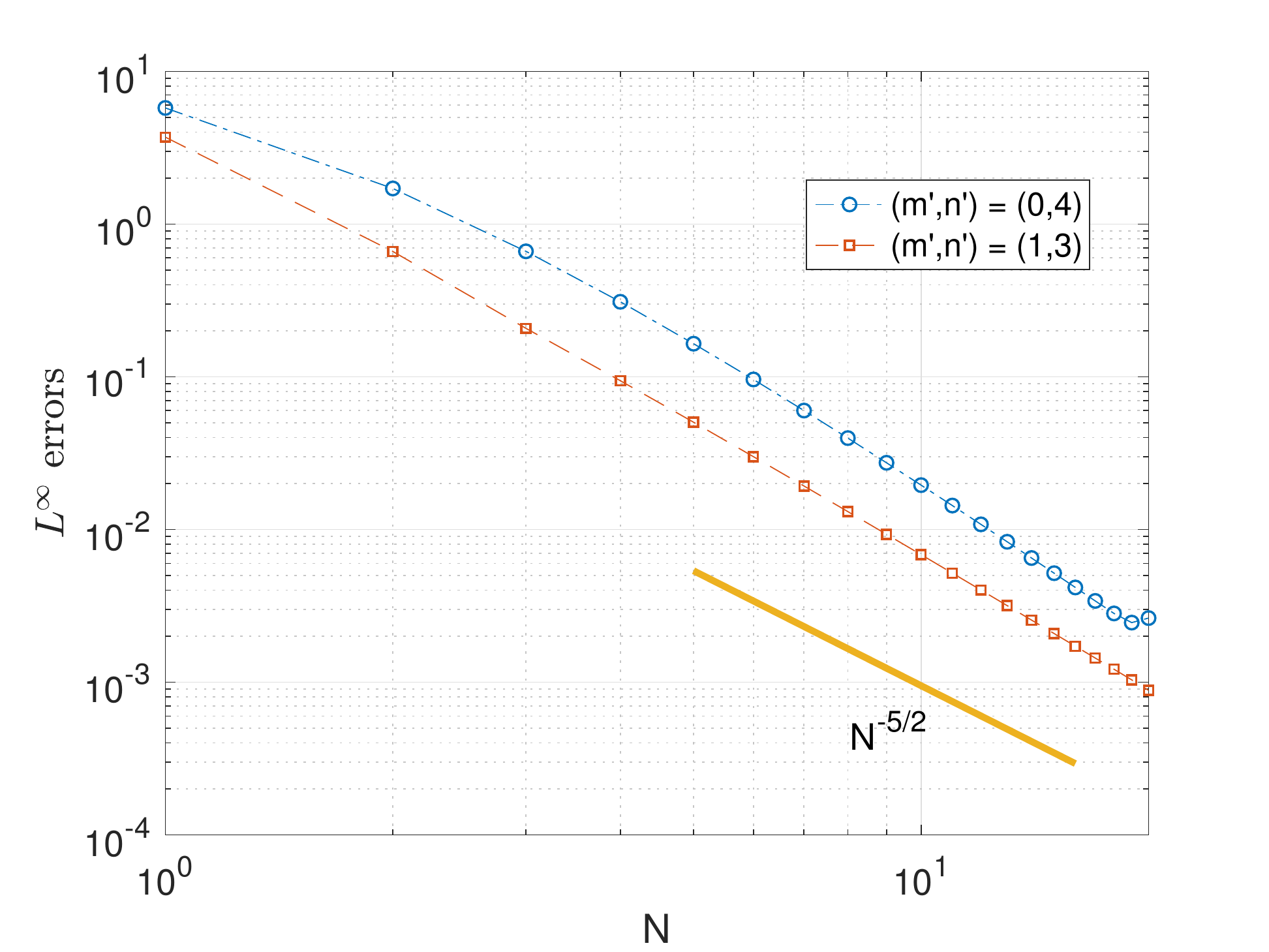}
		\label{jacbescom:1}
	}
	\subfigure[Zernike expansion of $J_{m'}(a_{m'n'\rho})\cos(m'\theta)$]
	{\includegraphics[scale=\sclta,trim=0 0 40 0,clip]{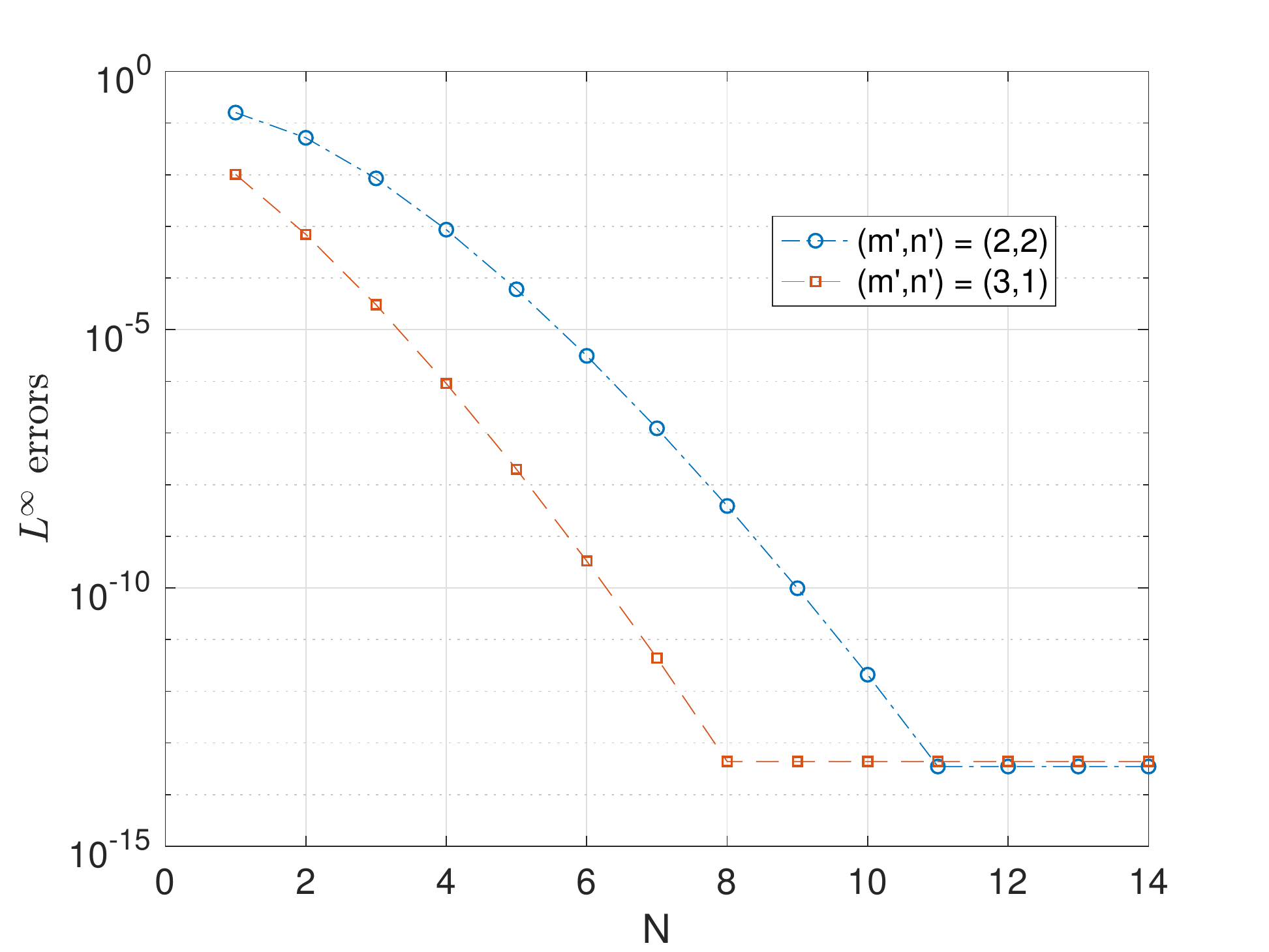}
		\label{jacbescom:2}
	}
	\caption{(a) $L^{\infty}$ errors in the representation of the corrected Zernike polynomial $g_{m'n'}(\rho,\theta)$ in terms of Bessel functions decay algebraically. Observe that the decay rate is $~ N^{-5/2}$ which is in agreement with the expected $N^{-(2p+1/2)}$ with $p = 1$ since only the first compatibility condition is satisfied by $g_{m'n'}$. (b), the $L^{\infty}$ errors in the representation of $J_{m'n'}(a_{m'n'}\rho)e^{im'\theta}$ in terms of Zernike polynomials decay exponentially. }\label{jacbescom}
\end{figure}

\begin{figure}[tbph]
	\centering
	\scalebox{\sclo}
	{\includegraphics{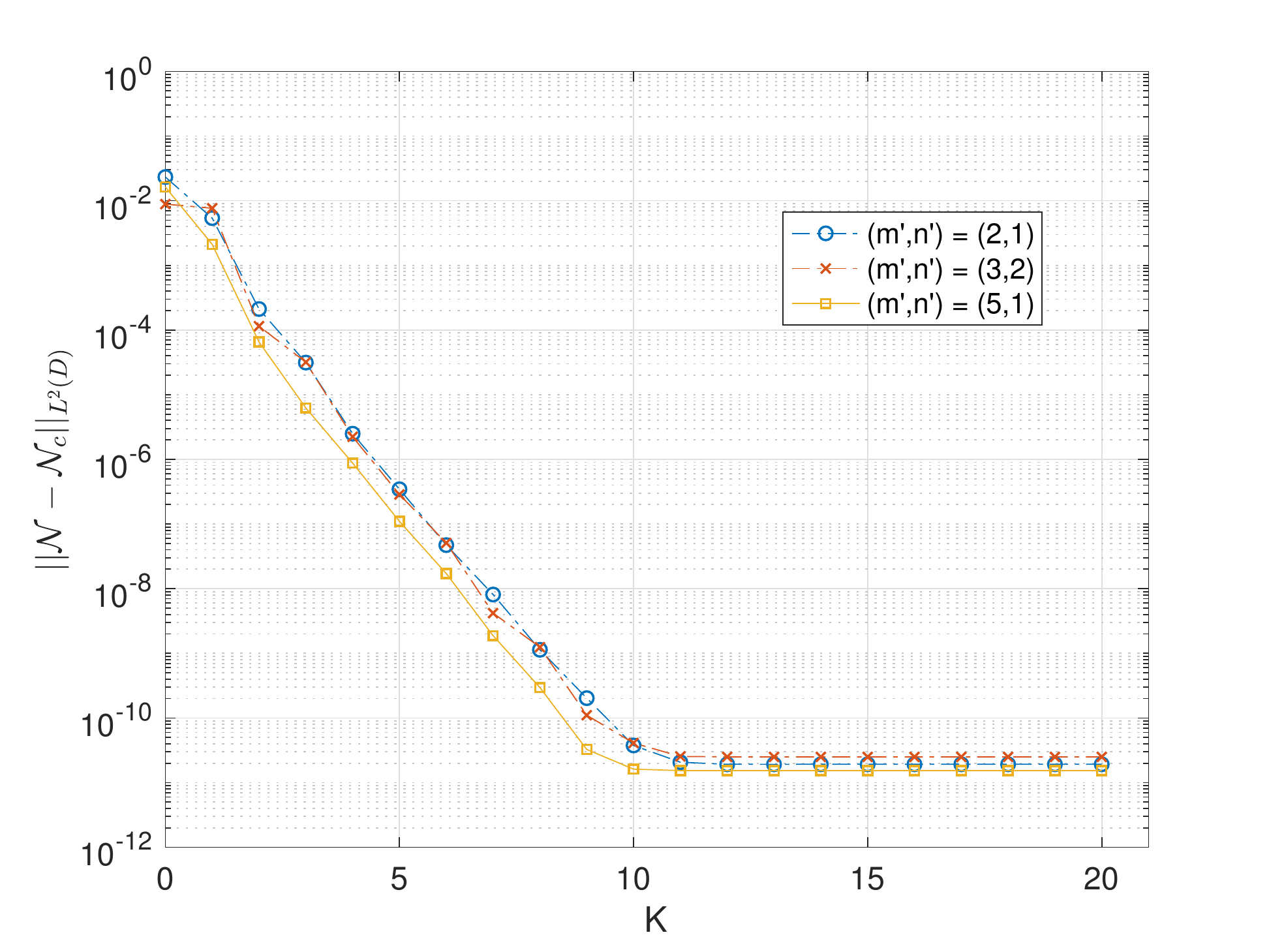}}
	\caption{Convergence of Neumann data vs TFE order $K$ for various Dirichlet conditions. The parameter choices are $M = 32$, $J = 20$, $N = 42$, $h = 1.0$ and $\epsilon = 0.2$.}\label{erpl}
\end{figure}

In order to test the DNO algorithm, we consider a case where Laplace's equation can be analytically solved and we have a closed form for the Neumann data. Let $(\rho,\theta,z')$ be the cylindrical coordinates for the unflattened cylinder (so the interface is $z' = \eta$). The general solution of (\ref{incomp}) on a cylinder with no-flow boundary conditions on the lateral and bottom walls is
\eqn
{\phi(\rho,\theta,z') = \sum_{m \in \mathbb{Z}, n \geq 1} b_{m,n}J_{|m|}(a_{|m|n}\rho)e^{im\theta}\cosh(a_{|m|n}(z'+h)), \label{gensol}
}
where $J_m$ and $a_{mn}$ are as defined earlier. Note that as $\phi$ is real-valued, we must have $b_{-m,n} = \overline{b_{m,n}}$ for all $m,n$. Fix $m' \geq 0, n' > 0$ and suppose that for a given interface $\eta(\rho,\theta)$, the Dirichlet data is of the form
\eqn{
	q(\rho,\theta) &=& J_{m'}(a_{m'n'}\rho)\cos(m'\theta)\frac{\cosh(a_{m'n'}(\eta(\rho,\theta) + h))}{\cosh(a_{m'n'}\norm{\eta+h}_{\infty})}. \label{eq:q:exa}
}
The particular solution of (\ref{gensol}) is then
\eqn{\phi(\rho,\theta,z') = J_{m'}(a_{m'n'}\rho)\cos(m'\theta)\frac{\cosh(a_{m'n'}(z'+h))}{\cosh(a_{m'n'}\norm{\eta+h}_{\infty})},\nonumber}
which can be used to compute the the Neumann data $\mathcal{N}(\rho,\theta)$ explicitly by (\ref{dno}). Figure \ref{erpl} displays the decay in the $L^2$ errors in the computed Neumann data $\mathcal{N}_c(\rho,\theta)$  for $f(\rho,\theta) =  J_{1}(a_{11}\rho)\cos(\theta)$
and $(m',n') = (2,1)$, $(3,2)$ and $(5,1)$.

\begin{figure}[tbph]
	\centering
	\scalebox{\sclo}
	{\includegraphics{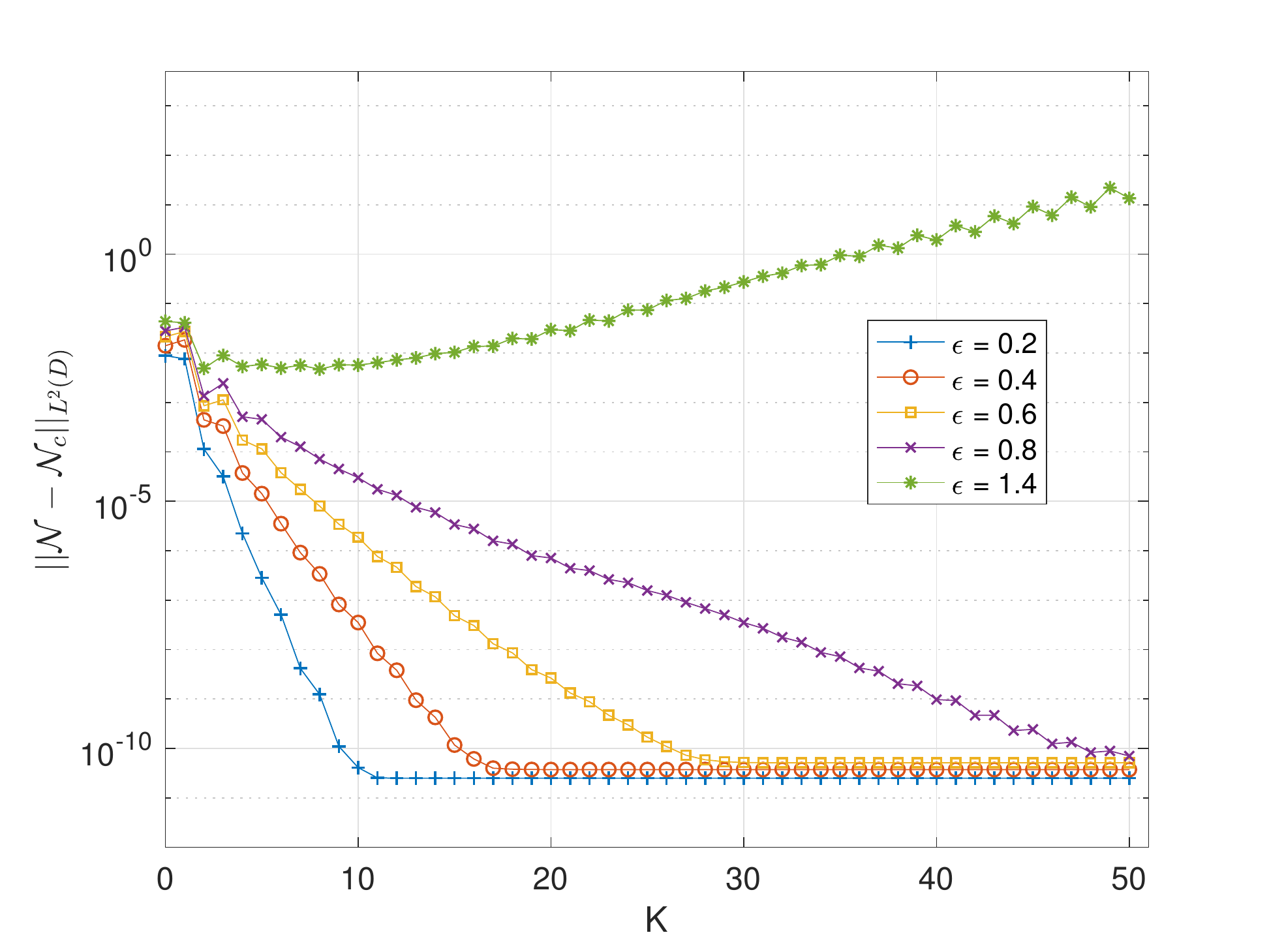}}
	\caption{Errors in Neumann data vs. TFE order $K$ for different values of $\epsilon$, for $(m',n')=(3,2)$ in Fig. \ref{erpl}. Observe that the error decay, while still exponential, slows down as $\epsilon$ is increased. For too large a value, the method fails to converge at all.}\label{erpleps}
\end{figure}

\begin{figure}[tbph]
	\centering
	\includegraphics[width=\linewidth]{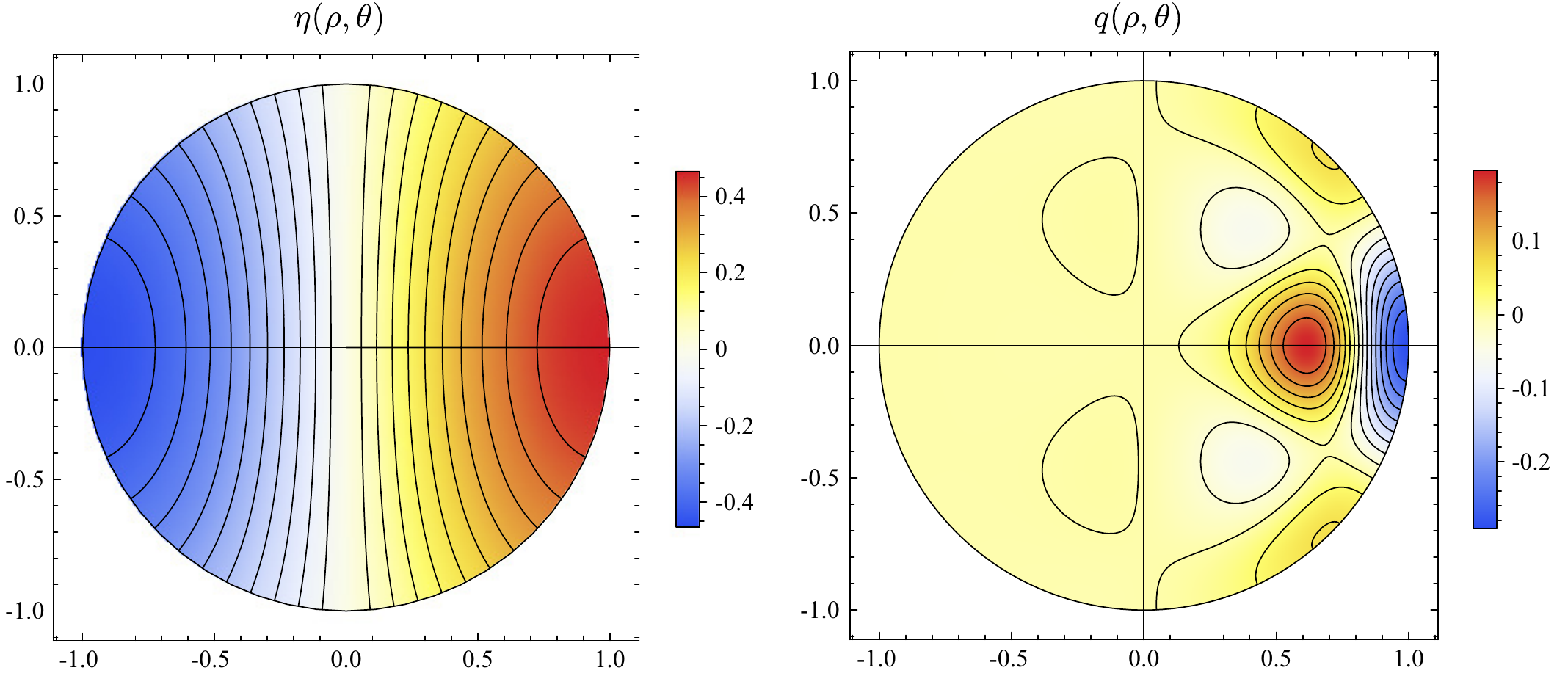}
	\caption{Contour plots of $\eta(\rho,\theta)$ and $q(\rho,\theta)$ corresponding to $\epsilon=0.8$ in Figure~\ref{erpleps}.
		Here $\eta(\rho,\theta)=0.8J_1(a_{11}\rho)\cos\theta$ and $q(\rho,\theta)$ is given by (\ref{eq:q:exa})
		with $(m',n')=(3,2)$. The TFE recursion converges in spite of the large deviation of $\eta(\rho,\theta)$ from
		the flat state.}\label{fig:contourPlotsE8}
\end{figure}

An indication of the role played by the size of the interface $\eta$
can be garnered by comparing the rates of convergence for different
values of $\epsilon$. Figure \ref{erpleps} shows that the rate slows
down for larger values, consistent with Theorem \ref{uconv}. For too
large a value ($\epsilon=1.4$ in this case), the requirement in
Theorem~\ref{uconv} that $B\epsilon<1$ ceases to hold, and the method
fails to converge.  This poses a limitation on the applicability of
this technique in that it may break down for very large-amplitude
interfaces. It is important to realize, however, that the choice of
$\epsilon$ itself is immaterial; the true determinant of convergence
is $\norm{\eta}_{H^s(D)}$ (since $B\epsilon=E_2\|\eta\|_{H^s(D)}$).
The radius of convergence can be increased by using the Pade
approximation to the DNO field expansion \cite{fang07}, but even this
approach may fail to converge for sufficiently large $\epsilon$.

In practice, the method converges for surprisingly large-amplitude
waves. Two examples are shown in Figures~\ref{fig:contourPlotsE8}
and~\ref{fig:stand1d}. Figure~\ref{fig:contourPlotsE8} shows the
interface $\eta(\rho,\theta)$ and Dirichlet data $q(\rho,\theta)$
corresponding to $\epsilon=0.8$ in Figure~\ref{erpleps}.  The function
$q(\rho,\theta)$ in (\ref{eq:q:exa}) is the product of a mildly
oscillatory function $J_3(a_{32}\rho)\cos(3\theta)$ and a hyperbolic
cosine function that decays by a factor of $0.000575$ from the right
side of the disk (where $\eta$ is largest) to the left side. Thus, the
oscillations in $q(\rho,\theta)$ in the left half of the unit disk are
strongly suppressed in comparison to those in the right half. The
gradient of $\eta$ in this example has magnitude $0.74$ at the origin,
so this wave profile is far from flat; nevertheless, it is still
well-inside the radius of convergence of the Dirichlet-Neumann
operator.

Figure~\ref{fig:stand1d} shows similar behavior in one-dimension. A
large-amplitude standing water wave of unit mean depth is evolved over
a quarter-period, $0\le t\le T/4$.  At the times shown, we computed
the TFE expansion of the Neumann data for the given Dirichlet data
$q(x,t)$ on the wave surface $\eta(x,t)$ and compared it to a boundary
integral computation of the Neumann data. At all times of the
simulation, the error converges rapidly to zero as the TFE order $K$
increases. The decay rate is fastest when the wave amplitude is small
(near $t=0$), and slowest when it is large (near $t=T/4$).  When
$t=T/4$, the wave comes to rest and the (numerical) Dirichlet data
obtained by evolving the water wave is zero to roundoff accuracy. The
resulting $q$ is taken as an exact (quite oscillatory,
small-amplitude) initial condition in both the TFE method and the
boundary integral method, and we still obtain rapid convergence of the
TFE solution to the boundary integral solution as $K$ increases from 0
to 65, just shifted down by a factor of $10^{-14}$ due to the small
size of $q$.

\begin{figure}[tbph]
	\centering
	\includegraphics[width=\linewidth]{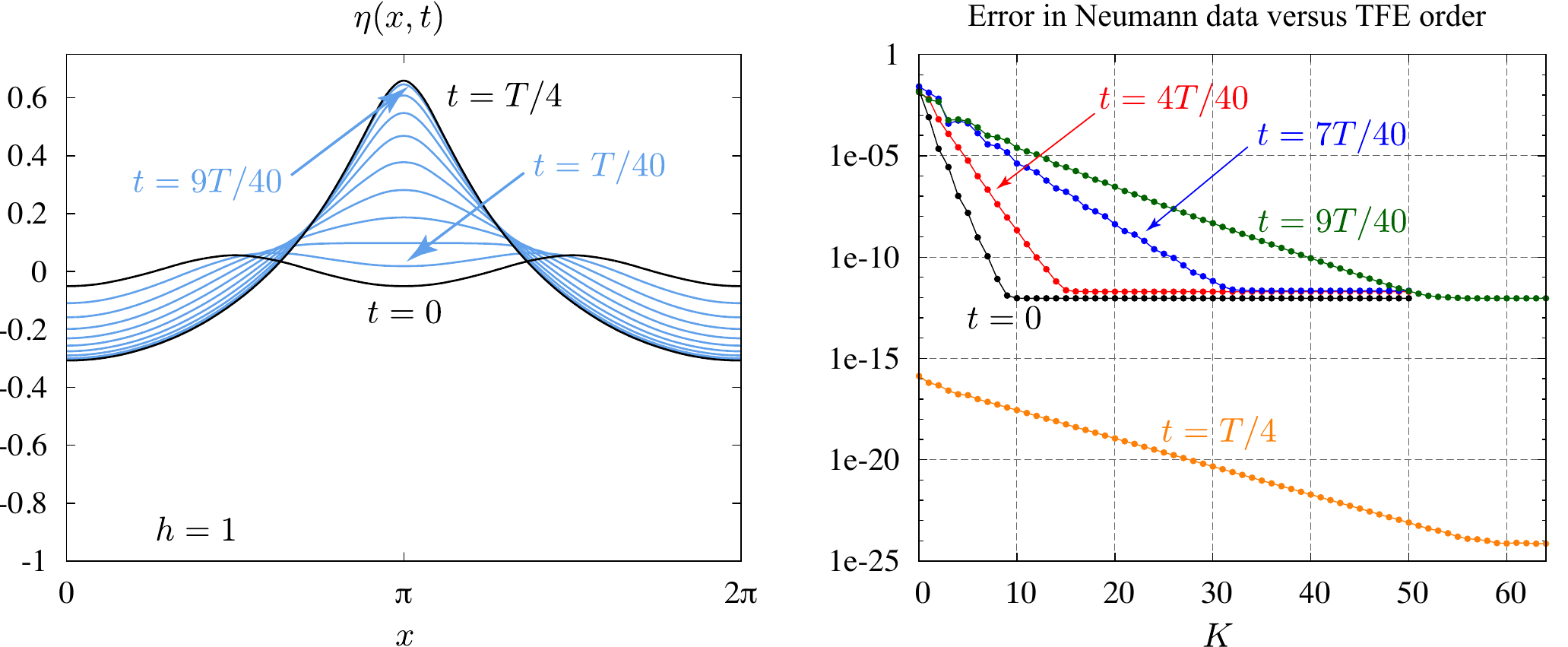}
	\caption{One-dimensional example showing that the transformed field expansion
		converges over the entire evolution of a large-amplitude
		standing water wave. (left) Evolution of solution B of Fig.~2
		in \cite{wilk:yu:2012}, which has period $T=7.240$. Here we
		replaced the boundary integral method in \cite{wilk:yu:2012}
		with the one-dimensional TFE code described in
		\cite{wilk5w}. (right) The error in the
		computed Neumann data at each TFE order is measured
		against the boundary integral solution at the times shown.
	}
	\label{fig:stand1d}
\end{figure}

%However, for systems that are supposed to be stable, this technique
%serves as an extremely efficient simulation tool.
%We present a qualitative validation of our technique.

We conclude with a water wave calculation in a cylindrical geometry to
demonstrate the effectiveness of the TFE method as a simulation tool
in this setting.  Consider a fluid initially at rest with
$\eta(\rho,\theta)|_{t = 0} = 0.05\rho e^{-15 \rho^2}\cos(\theta)$. Using our
DNO solver in conjunction with a time-integration technique for the
system (\ref{etaeqn},\ref{qeqn}), we can numerically evolve the system
and validate it qualitatively (see \cite{qadeerthesis,faradaypaper}
for details). Figure \ref{sim2} shows various stages in the
progression of the fluid. In particular, one can note the collapse of
the crest and trough and their outward dispersion and reflection after
striking the lateral boundaries.

\begin{figure}[tbph]
	\def \scl {0.15}
	\centering
	{
		\includegraphics[scale=\scl]{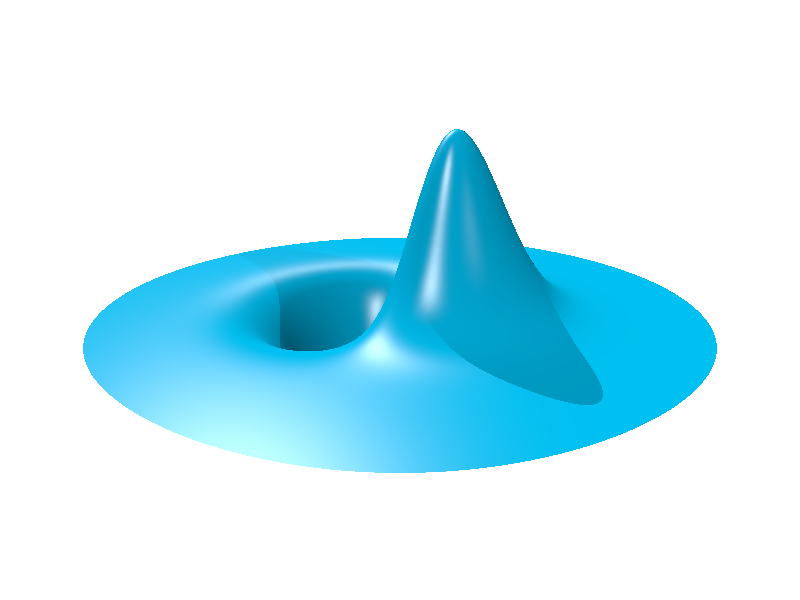}
		\includegraphics[scale=\scl]{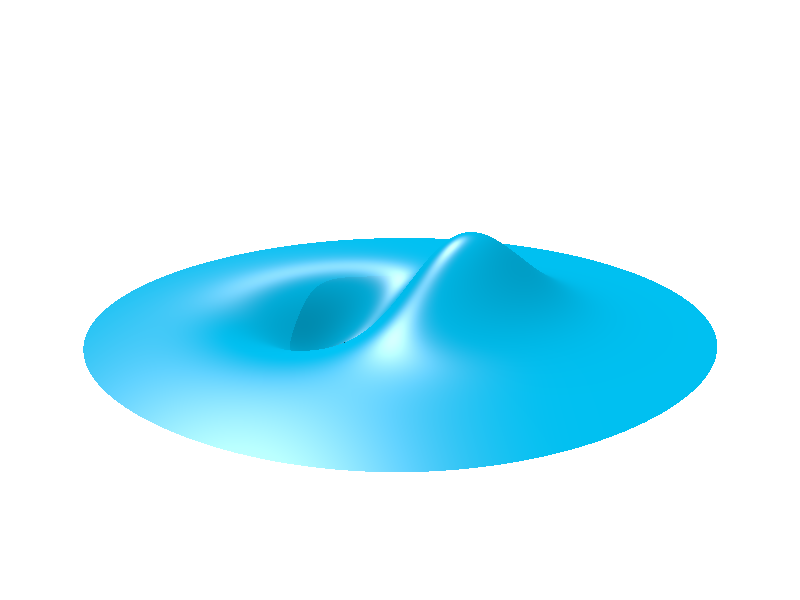}
		\includegraphics[scale=\scl]{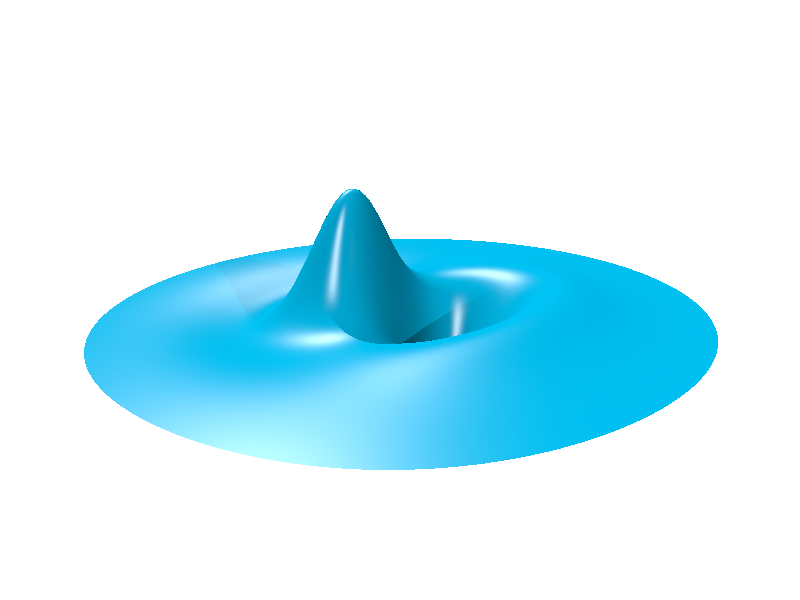}
		\includegraphics[scale=\scl]{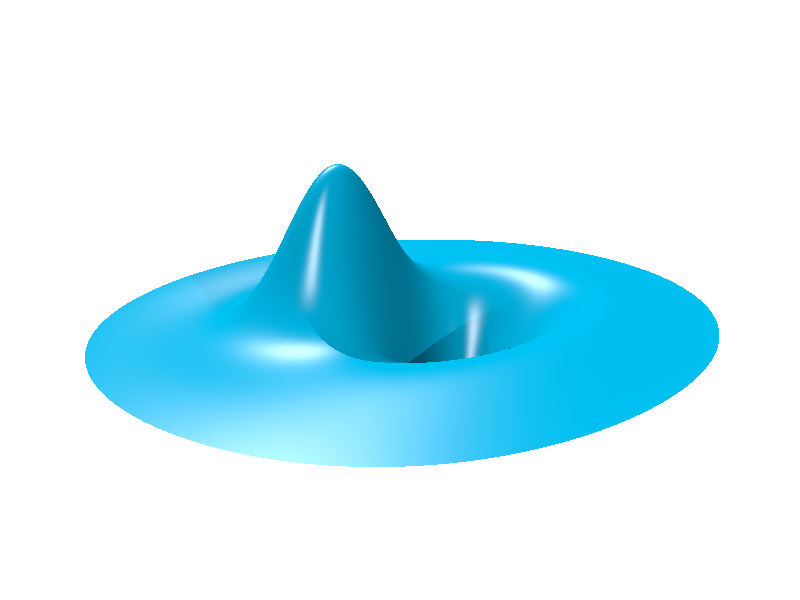}
		\includegraphics[scale=\scl]{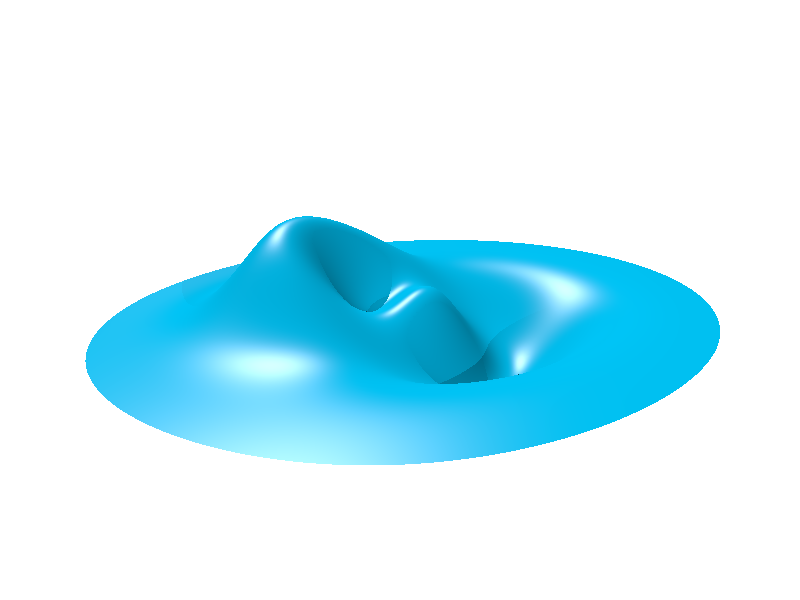}
		\includegraphics[scale=\scl]{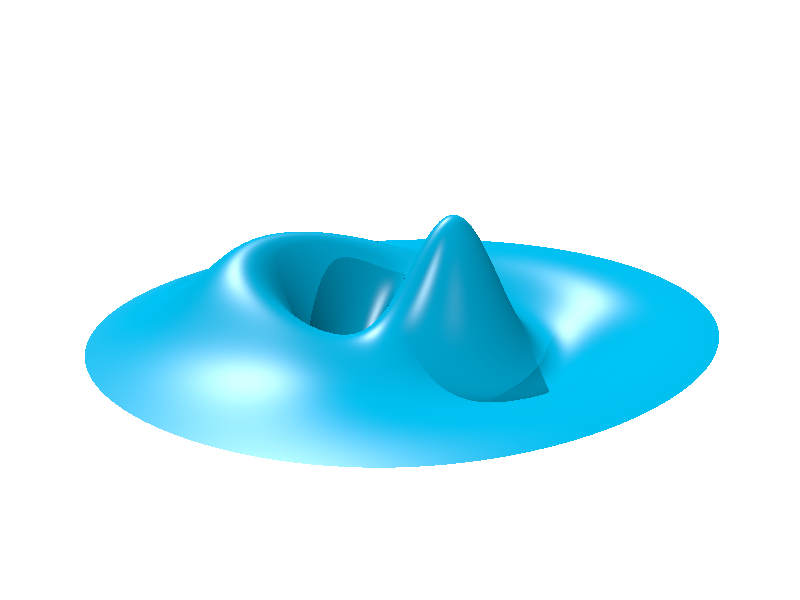}
		\includegraphics[scale=\scl]{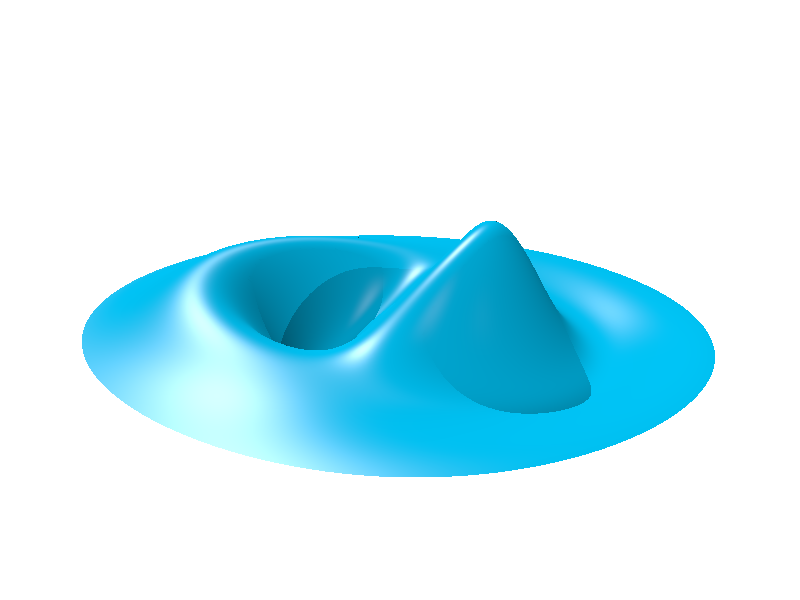}
		\includegraphics[scale=\scl]{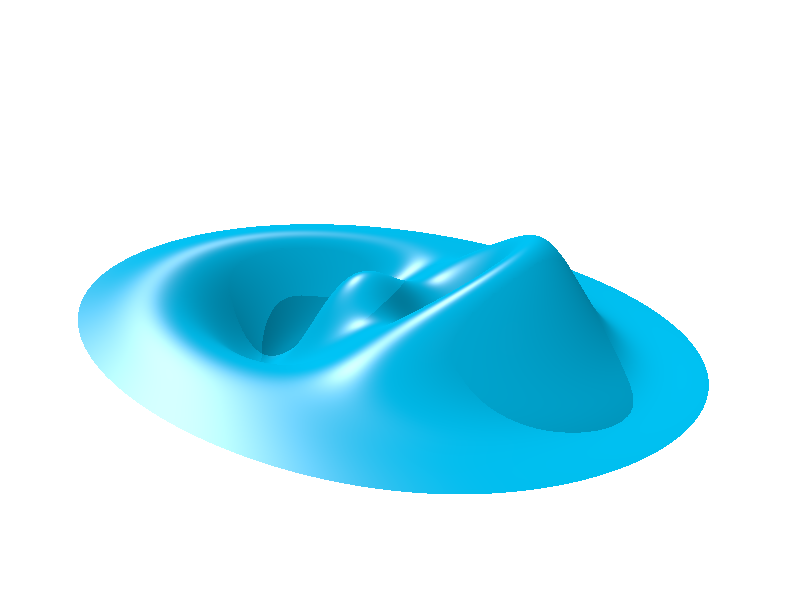}
		\includegraphics[scale=\scl]{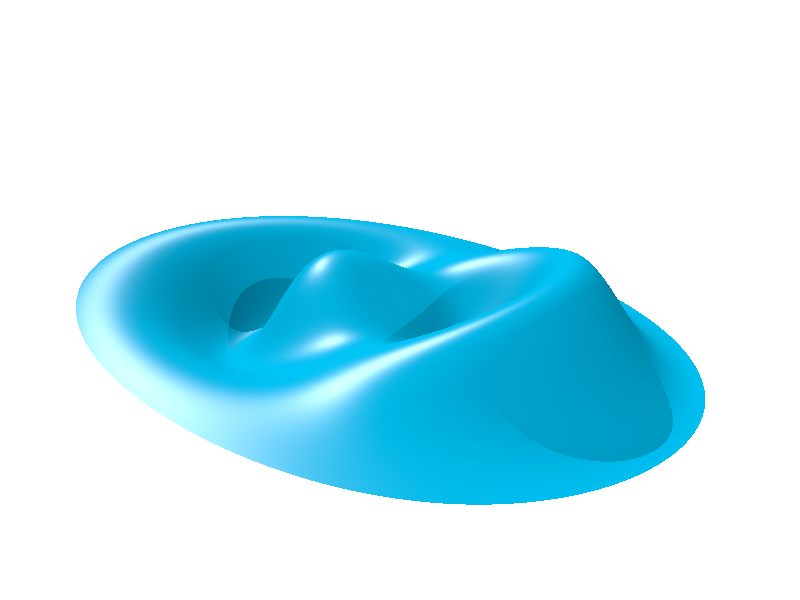}
		\includegraphics[scale=\scl]{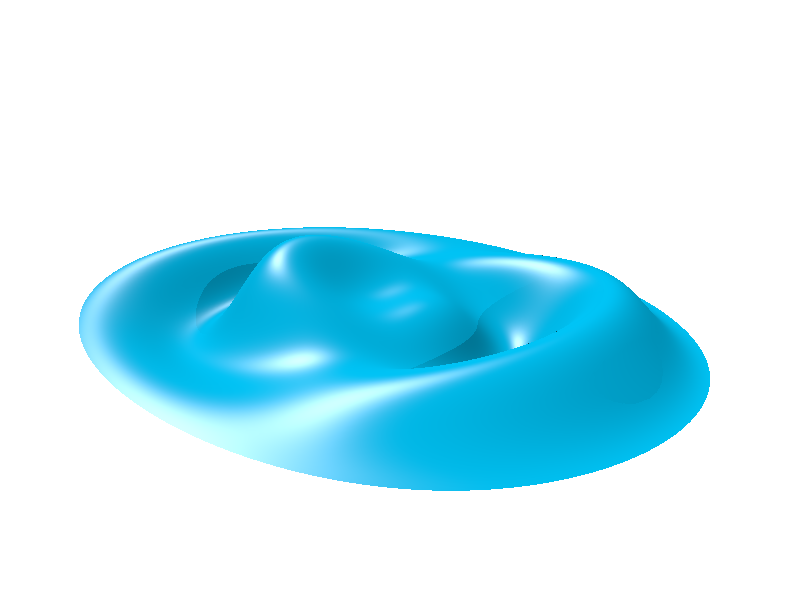}
		\includegraphics[scale=\scl]{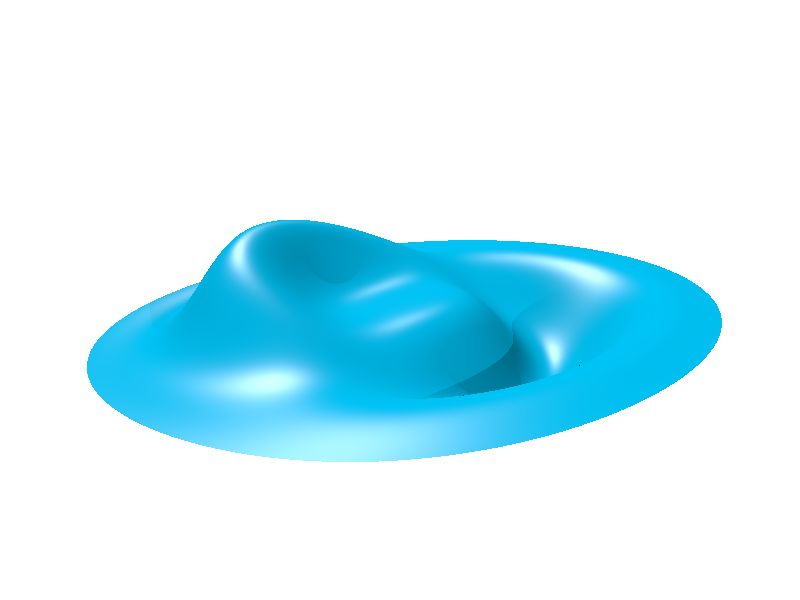}
		\includegraphics[scale=\scl]{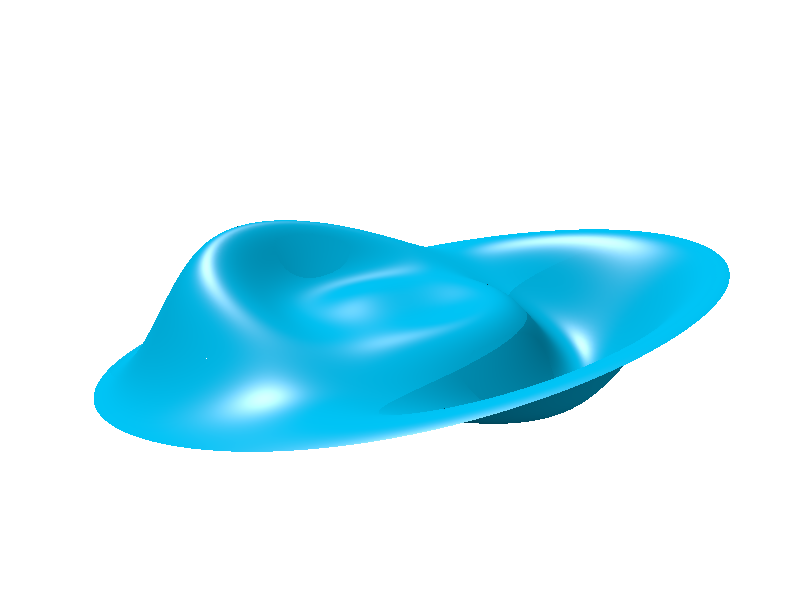}
		\includegraphics[scale=\scl]{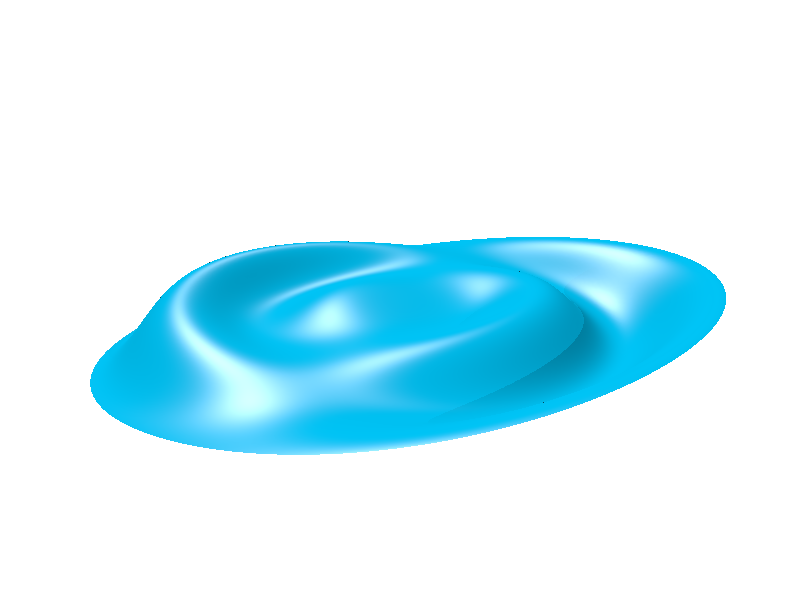}
		\includegraphics[scale=\scl]{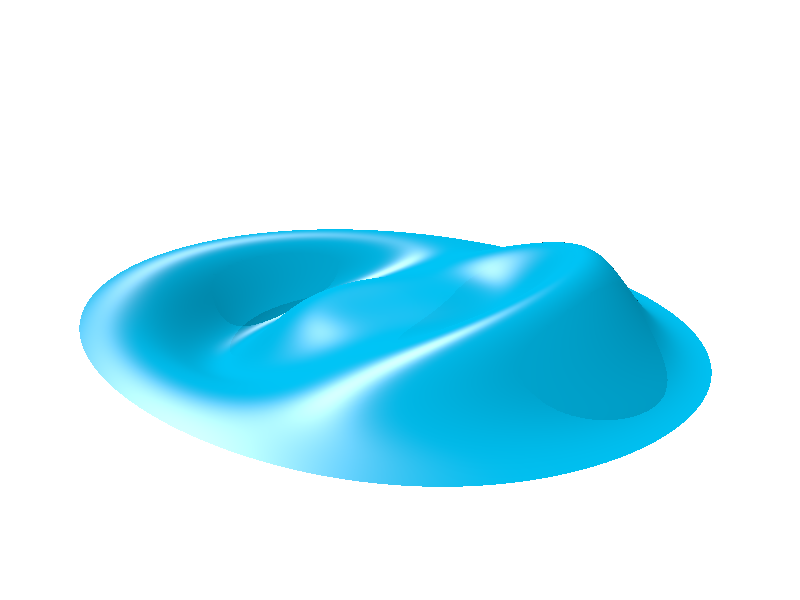}
		\includegraphics[scale=\scl]{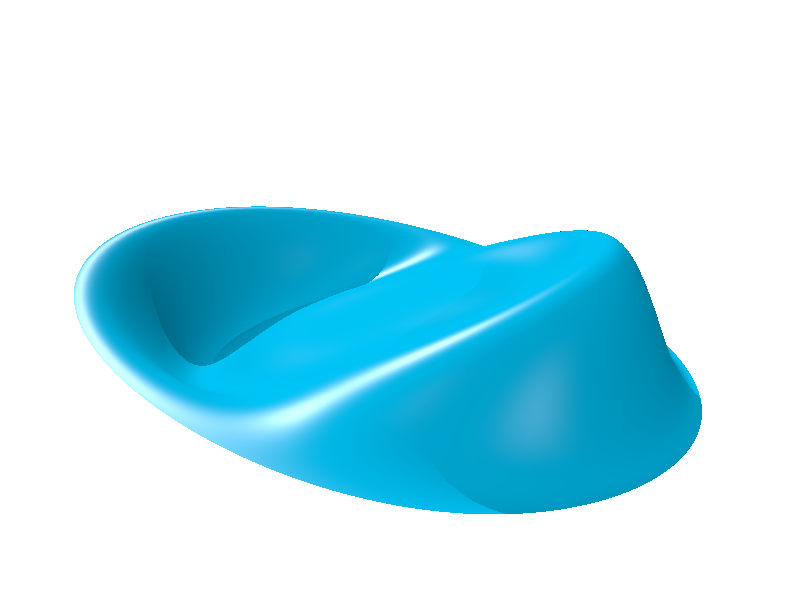}
	}
	\caption{Evolution of the interface at $t = 0,1/80,2/80,\hdots, 14/80$. We use $M = 4, J = 20, N = 40, K = 2, h = 0.5, \epsilon = 0.01$ and RK4 for the time evolution with a time-step of $\Delta t = 1/1200$.} \label{sim2}
\end{figure}

\section{Conclusion}
We have presented a new technique for computing the DNO for Laplace's equation on a cylinder of finite depth. Its novelty lies in the fact it is primarily tailored for a 3D geometry and does not rely on periodic boundary conditions to avoid dealing with the fluid-boundary interactions. Hence, this method represents a major step-up from the methods that are currently in use. In addition, it is easily applicable to other regular domains in 3D, e.g., prisms, parallelepipeds, etc. and may also find use in domains with structured irregularities (for instance \cite{nachmil}). A similar approach may be developed for the computation of the DNO on a sphere, as formulated by \cite{llave}.

The development of the technique is a generalization of the three-term recurrence formulation presented in \cite{nichollshops,nicreit04}. However, using tools from differential geometry, we were able to significantly cut down on the tedious algebra. We allied the formulation with a particular choice of basis functions on the cylinder that are amenable to the various operations that arise ubiquitously and hence obtained a fast algorithm. The preference for Jacobi polynomials over Bessel functions in the radial direction is borne out of the need for faster manipulations and greater accuracy. 
%It is notable that the infinite series representation for the latter is of the form \eqn{J_{m}(r) = r^{|m|}(c_0 + c_1r^2 + c_2r^4 + \hdots)\nonumber} and that the Jacobi polynomials, with the orthogonality weight, also possess a similar form, namely, $r^{|m|}P^{(0,|m|)}_n(2r^2 - 1)$. Thus, these polynomials serve as effective substitutes for Bessel functions but with superior approximation properties for representing functions on the disc. In addition, Jacobi polynomials come with quadrature rules, orthogonality, recurrence relations and closed forms of eigenvalues and derivatives, among other desirable properties. These are harder to obtain for Bessel functions. For instance, orthogonality only occurs when the radial axis is scaled by the zeros of the Bessel functions (or their derivatives), which in turn need to be computed beforehand.     

A similar advantage is gained by the use of Lagrange polynomials with respect to the Chebyshev-Lobatto nodes along the $z$-axis in place of hyperbolic functions. These polynomials possess a fast transformation to Chebyshev polynomials, which can in turn be differentiated and evaluated accurately at arbitrary points; this comes in handy when, for instance, setting up the quadrature matrices. While not affecting the computational cost and accuracy, they allow us to apply the boundary conditions more easily than would be possible for other function families. The structure of the basis functions therefore yields a fast, well-conditioned solver and ensures that implementation boils down to a sequence of linear algebra operations that can be performed rapidly using BLAS and LAPACK routines.

The analysis of Zernike polynomials presented here illustrates their approximation properties on a disc. The function class $H^s(D)$ is broad enough to describe the majority of the phenomena encountered in water wave problems. The approximation estimate also leads to a rigorous convergence proof for the TFE method. In particular, it establishes that the convergence hinges only on $\norm{\eta}_{H^s(D)}$: the strength of the error decay, as well as possible divergence, is dependent entirely on the interface shape $\eta$. The result also shows that the parameter $\epsilon$ is merely a book-keeping device to help group together terms of the same order when deriving the recurrence formulas. We can conclude that this technique yields a fast and accurate solver for nonlinear water-wave equations when the amplitude does not grow too large.

Since this approach relies on the potential form of the water-wave equations, it disallows dissipation as it appears in the Navier-Stokes equations. To counter this, one can use models of potential viscous flows that artificially introduce dissipation. These have been noted to lead to correct results in the linear wave limit \cite{kaknic09} and have found use in various applications \cite{milrios}. The TFE technique lends itself to these models in a fairly straightforward manner \cite{qadeerthesis,faradaypaper}.

\begin{appendices}
\section{Fractional Sobolev Spaces and Interpolation}\label{appa}
In this appendix, we define the norm on $H^s(D)$ and state the
interpolation result that we used in Theorem \ref{discerr}. When $m =
2k$ is an even integer, we define \eqn{ \norm{u}^2_{H^m(D)} &=&
  \sum_{|\alpha| \leq m} \norm{\partial^\alpha u
  }^2_{L^2(D)}, \label{hmnorm} } and for $s = 2(k+\nu)$ for $0 < \nu <
1$, we define \eqn{ \norm{u}_{H^s(D)} &=&
  \norm{S_k^{1-\nu}u}_{H^{2k+2}(D)}, \label{hsnorm} } where $S_k$ is
the unique positive square root of the compact self-adjoint operator
$S_k^2$ defined by
$$\ip{S_k^2u,v}_{H^{2k+2}(D)} = \ip{u,v}_{H^{2k}(D)}$$
for $u,v \in H^{2k+2}(D)$. As $\nu \to 0$, $\norm{u}_{H^s(D)} \to \norm{u}_{H^{2k}(D)}$; as $\nu \to 1$, $\norm{u}_{H^s(D)} \to \norm{u}_{H^{2k+2}(D)}$; and for $\nu = \frac{1}{2}$, $\norm{u}_{H^s(D)}$ is equivalent to the norm in (\ref{hmnorm}) with $m = 2k+1$ (see \cite{bmaday97,chandler_int,mclean}). 

The principal theorem of interpolation \cite{bmaday97} states in our case that if $T:H^{2k}(D) \to L^2(D)$ is bounded with norm $A$ and $T|_{H^{2k+2}(D)}$ is bounded with norm $B$, then $T|_{H^s(D)}$ is bounded with norm $\leq A^{1-\nu}B^{\nu}$, where $s$ and $\nu$ are as above.

\end{appendices}

\end{document}